\newtheorem{theorem}{Theorem}[section]
\newtheorem{lemma}[theorem]{Lemma}
\newtheorem{proposition}[theorem]{Proposition}
\newtheorem{corollary}[theorem]{Corollary}
\newtheorem{definition}[theorem]{Definition}
\numberwithin{equation}{section}
\begin{document}

\baselineskip=15.5pt

\title[Orthogonal and symplectic parabolic bundles]{Orthogonal and 
symplectic parabolic bundles}

\author[I. Biswas]{Indranil Biswas}

\address{School of Mathematics, Tata Institute of Fundamental
Research, Homi Bhabha Road, Bombay 400005, India}

\email{indranil@math.tifr.res.in}

\author[S. Majumder]{Souradeep Majumder}

\email{souradip@math.tifr.res.in}

\author[M. L. Wong]{Michael Lennox Wong}

\email{wong@math.tifr.res.in}

\subjclass[2000]{14F05, 14H60}

\keywords{Orthogonal and symplectic bundle, parabolic structure,
logarithmic connection}

\date{}

\begin{abstract}
We investigate orthogonal and symplectic bundles with parabolic
structure, over a curve.
\end{abstract}

\maketitle

\section{Introduction}

Let $X$ be an irreducible smooth complex projective curve, and
let $S\, \subset\, X$ be a fixed finite subset. The notion of
parabolic vector bundles on $X$ with $S$ as the parabolic divisor
was introduced by C. S. Seshadri \cite{Se1}. Let $G$ be any
complex reductive group. The generalization of parabolic bundles
to the context of $G$--bundles was done in \cite{BBN}.

Here we take $G$ to be the orthogonal or symplectic group. In these
cases the parabolic bundles can be considered as parabolic vector
bundles with a symmetric or alternating form taking values in a
parabolic line bundle; the form has to be nondegenerate in a
suitable sense.

We define algebraic connection on orthogonal and symplectic
parabolic bundles, and give a criterion for the existence
of such a connection (see Theorem \ref{thm1} and Theorem \ref{thm2}).
This criterion is similar to the one of Weil and Atiyah (\cite{We},
\cite{At}) for the existence of an algebraic connection on a vector 
bundle over $X$.

It turns out that an orthogonal or symplectic parabolic
bundle is semistable (respectively, polystable) if and only if the
underlying parabolic vector bundle is semistable (respectively, 
polystable); see Proposition \ref{prop2}, Proposition \ref{prop3}
and Corollary \ref{cor4}.

We also prove the following theorem (see Theorem \ref{cor3}):

\begin{theorem}
Let $(E_*\, ,\varphi)$ be an orthogonal or symplectic parabolic
bundle. Then $(E_*\, ,\varphi)$ admits an Einstein--Hermitian
connection if and only if it is polystable.
\end{theorem}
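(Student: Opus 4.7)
The plan is to reduce the statement to the parabolic analogue of the Kobayashi--Hitchin correspondence for vector bundles (Mehta--Seshadri in rank two, extended by Biquard and Simpson), combined with the equivalence between polystability of $(E_*\, ,\varphi)$ and polystability of the underlying parabolic vector bundle $E_*$ recorded in Corollary \ref{cor4}.

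For the ``only if'' direction, suppose $(E_*\, ,\varphi)$ admits an Einstein--Hermitian connection. Forgetting the compatibility with $\varphi$, this is an EH connection on the parabolic vector bundle $E_*$; hence by the parabolic Kobayashi--Hitchin theorem $E_*$ is polystable, and then Corollary \ref{cor4} yields polystability of $(E_*\, ,\varphi)$.

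For the ``if'' direction, suppose $(E_*\, ,\varphi)$ is polystable. By Corollary \ref{cor4} the underlying parabolic vector bundle $E_*$ is polystable, so by the parabolic Kobayashi--Hitchin theorem it carries an EH metric $h_0$. The remaining task is to modify $h_0$ into an EH metric whose Chern connection also preserves $\varphi$. The nondegeneracy of $\varphi$ provides a parabolic isomorphism $E_* \longrightarrow E^{*}_{*} \otimes L_*$, where $L_*$ is the parabolic line bundle in which $\varphi$ takes values. Fixing the EH metric $h_L$ on $L_*$ and transporting $h_0^*\otimes h_L$ back to $E$ via this isomorphism produces a second Hermitian metric $\sigma(h_0)$ on $E$, again EH by naturality (the Chern curvature of $h_0^*\otimes h_L$ is a scalar multiple of the identity whenever those of $h_0$ and $h_L$ are). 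By uniqueness of the EH metric on a polystable parabolic vector bundle up to positive self-adjoint parabolic automorphisms preserving the isotypical decomposition, one has $\sigma(h_0) = h_0 \circ A$ for some such $A$. Setting $h := h_0\circ\sqrt{A}^{-1}$ then produces an EH metric with $\sigma(h)=h$, and this fixed-point property is precisely the condition that the associated Chern connection preserves $\varphi$.

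The main obstacle is the careful execution of the above modification inside the parabolic category: one must verify that the involution $\sigma$ is well-defined on the space of admissible parabolic Hermitian metrics (compatible with the weights at $S$), that $A$ is a parabolic automorphism respecting the polystable decomposition, and that its positive square root exists as a parabolic morphism. The symplectic case, where $\varphi$ is alternating rather than symmetric, requires slightly different sign conventions in the construction of $\sigma$, but the argument is entirely parallel to the orthogonal case.
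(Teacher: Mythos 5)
Your ``only if'' direction has a circularity problem in the context of this paper. You pass from ``$E_*$ admits an Einstein--Hermitian connection'' to ``$E_*$ is polystable'' (fine, that is Theorem \ref{b}), and then invoke Corollary \ref{cor4} to upgrade polystability of $E_*$ to polystability of the pair $(E_*\,,\varphi)$. But Corollary \ref{cor4} is itself deduced \emph{from} Theorem \ref{cor3}: its proof takes the Einstein--Hermitian connection on a polystable $E_*$, observes it is compatible with $\varphi$, and then applies the ``only if'' direction of Theorem \ref{cor3} to conclude the pair is polystable. So the implication ``$E_*$ polystable $\Rightarrow$ $(E_*\,,\varphi)$ polystable'' is not available to you at this stage; it is an output of the theorem, not an input. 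The paper instead argues the ``only if'' direction directly, by adapting the proof of the corresponding implication in Theorem \ref{b} to the orthogonal/symplectic setting (working with isotropic subbundles and producing the decomposition demanded by the definition of a polystable pair). To repair your argument you would need either to do that, or to give an independent, purely algebraic proof that a nondegenerate $L_*$-valued form on a polystable parabolic bundle forces the pair to decompose as in the definition of polystability (in the spirit of the socle argument of Proposition \ref{prop3}). A smaller slip: in the ``if'' direction, the implication ``$(E_*\,,\varphi)$ polystable $\Rightarrow$ $E_*$ polystable'' is Proposition \ref{prop3}, not Corollary \ref{cor4}, which states the converse.

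Your ``if'' direction is sound in outline but heavier than necessary. The paper's Theorem \ref{b} asserts that the Einstein--Hermitian connection on a polystable parabolic bundle is \emph{unique} as a connection, not merely unique up to automorphism. Granting that, one simply notes that $\widetilde\varphi_*D$ and the connection built from $D^*$ and $\nabla_L$ are both Einstein--Hermitian connections on the polystable bundle $E^*_*\otimes L_*$, hence equal; that equality is exactly compatibility of $D$ with $\varphi$. Your fixed-point argument --- defining the involution $\sigma$ on metrics via $\widetilde\varphi$, writing $\sigma(h_0)=h_0\circ A$, and passing to a square root of $A$ --- is the standard device when only uniqueness up to automorphism is available, and it can be carried out here (with the verifications you list: that $\sigma$ preserves the admissible parabolic metrics, that $A$ is a parallel positive self-adjoint parabolic automorphism, and that the square root behaves correctly under $\sigma$), but it buys you nothing over the two-line uniqueness argument in this setting.
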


\section{Orthogonal and symplectic structure}

\subsection{Parabolic vector bundles}

Let $X$ be an irreducible smooth complex projective curve.
Fix distinct points of $X$
\begin{equation}\label{e1}
S\, :=\, \{x_1\, ,\cdots\, ,x_n\}\, \subset\, X\, .
\end{equation}

Let $E\, \longrightarrow\, X$ be a vector bundle.
A \textit{quasi--parabolic} structure on $E$ over $S$ is a filtration
of subspaces
\begin{equation}\label{e2}
E_{x_i} \,=:\, F_{i,1} \, \supsetneq\, \cdots\, \supsetneq\,
F_{i,j} \, \supsetneq\, \cdots\,
\supsetneq\, F_{i,a_i}\, \supsetneq\, F_{i,a_i+1} \,=\, 0
\end{equation}
over each point of $S$. A \textit{parabolic} structure on $E$ is
a quasi--parabolic structure as above together with real
numbers
\begin{equation}\label{e3}
0\, \leq\, \alpha_{i,1} \, <\, \cdots\, <\, \alpha_{i,j}
\, <\, \cdots\, <\, \alpha_{i,a_i}\, <\, 1
\end{equation}
associated to the quasi--parabolic flags. (See \cite{Se1},
\cite[p. 67]{Se2}, \cite{MY}.)
The numbers in \eqref{e3} are called \textit{parabolic weights}. 

A vector bundle equipped with a quasi--parabolic structure over $S$
and parabolic
weights as above is called a \textit{parabolic vector bundle} with
parabolic structure over $S$. The subset $S$ is called the
\textit{parabolic divisor} for the parabolic vector bundle.

We fix the divisor $S$ once and for all. Henceforth, the
parabolic divisor for all parabolic
vector bundles will be $D$.

For notational convenience, a parabolic vector bundle $(E\, ,
\{F_{i,j}\}\, ,\{\alpha_{i,j}\})$ as above will
also be denoted by $E_*$.

The \textit{parabolic degree} is defined to be
\begin{equation}\label{pd}
\text{par-deg}(E_*)\, :=\, \text{degree}(E)+\sum_{i=1}^n
\sum_{j=1}^{a_i} \alpha_{i,j}\cdot \dim (F_{i,j}/F_{i,j+1})\, .
\end{equation}
The real number 
$\text{par-deg}(E_*)/\text{rank}(E_*)$ is called the
\textit{parabolic slope} of $E_*$, and it is denoted by $\mu_{\rm 
par}(E_*)$.

\subsection{Parabolic dual and parabolic tensor product}

In \cite{MY}, an equivalent definition of parabolic vector bundles
was given. This definition of \cite{MY}
is very useful to work with; we will
recall it now. Take a parabolic vector bundle $(E\, ,
\{F_{i,j}\}\, ,\{\alpha_{i,j}\})$ defined as in \eqref{e2} and
\eqref{e3}. For a point $x_i\, \in\, S$ and $t\, \in\,
[0\, ,1]$, let
$$
E^{i,t}\, \subset\, E
$$
be the coherent subsheaf defined as follows: if $t\, \leq\, 
\alpha_{i,1}$, then
$$
E^{i,t}\, =\, E\, ,
$$
if $t\, >\, \alpha_{i,1}$, then $E^{i,t}$
is defined by the short exact sequence
$$
0\, \longrightarrow\, E^{i,t}\, \longrightarrow\, E
\, \longrightarrow\, E/F_{i,j+1} \, \longrightarrow\, 0\, ,
$$
where $j\, \in\, [1\, ,a_i]$ is the largest number such that 
$\alpha_{i,j}\, < \, t$. Since $F_{i, a_i+1}\,=\, 0$ (see
\eqref{e2}), it follows that $E^{i,t}\, =\, E\bigotimes_{
{\mathcal O}_X}{\mathcal O}_X(-x_i)$ for $t\, >\,
\alpha_{i,a_i}$. For $t\, \in\, [0\, ,1]$, define
$$
E^{(t)}\, =\, \bigcap_{i=1}^n E^{i,t}\, \subset\, E\, .
$$

Now we have a filtration of coherent sheaves
$\{E_t\}_{t\in \mathbb R}$ defined by
\begin{equation}\label{et}
E_t\, :=\, E^{(t-[t])}\otimes {\mathcal O}_X(-[t]S)\, ,
\end{equation}
where $[t]$ is the integral part of $t$ (so $0\, \leq\,
t-[t]\, <\, 1$). Note that
\begin{enumerate}
\item{} the sheaf $E_t$ decreases as $t$ increases,

\item the filtration $\{E_t\}_{t\in \mathbb R}$
is left--continuous, more precisely, there is an $\epsilon\,>\,
0$ such that $E_{t-\epsilon}\,=\, E_t$ for all $t$, and

\item $E_{t+1} \,=\, E_t\bigotimes {\mathcal O}_X(-S)$ for all $t$.
\end{enumerate}
{}From the construction of the filtration $\{E_t\}_{t\in \mathbb R}$
it is evident that the parabolic vector bundle $(E\, ,
\{F_{i,j}\}\, ,\{\alpha_{i,j}\})$ can be recovered from it. Conversely,
given any filtration of coherent sheaves satisfying the above three
conditions, we get a parabolic vector bundle on $X$ with parabolic
parabolic structure over $S$. In \cite{MY}, a parabolic vector
bundle on $X$ with parabolic structure over $S$ is defined to
be a filtration of coherent sheaves satisfying the above three
conditions.

Let
\begin{equation}\label{iota}
\iota\, :\, X\setminus S\, \hookrightarrow\, X
\end{equation}
be the inclusion of the complement. For any coherent sheaf $V$
on $X\setminus S$, the direct image $\iota_*V$ is a quasi--coherent
sheaf on $X$.

Let $\{V_t\}_{t\in \mathbb R}$ and $\{W_t\}_{t\in \mathbb R}$
be the filtrations corresponding to two parabolic
vector bundles $V_*$ and $W_*$ respectively. Consider the 
torsionfree quasi--coherent
sheaf $\iota_*((V_0\bigotimes W_0)\vert_{X\setminus S})$ on $X$,
where $\iota$ is defined in \eqref{iota}. Note that
$V_s\bigotimes W_t$ is a coherent subsheaf of it for
all $s$ and $t$. For any $t\, \in\,
\mathbb R$, let
$$
{\mathcal E}_t\, \subset\, \iota_*((V_0\otimes W_0)\vert_{X
\setminus S})
$$
be the quasi--coherent subsheaf generated by all
$V_\alpha\bigotimes W_{t-\alpha}$, $\alpha\, \in\, \mathbb R$.
It is easy to see that ${\mathcal E}_t$ is a coherent sheaf, and
the collection $\{{\mathcal E}_t\}_{t\in \mathbb R}$ satisfies all
the three conditions needed to define a parabolic vector bundle
on $X$ with parabolic structure over $S$.

The parabolic vector bundle defined by $\{{\mathcal E}_t\}_{t\in \mathbb 
R}$ is denoted by $V_*\bigotimes W_*$, and it is called the
\textit{tensor product} of $V_*$ and $W_*$.

Now consider the torsionfree quasi--coherent
sheaf $\iota_*((V^*_0\bigotimes W_0)\vert_{X\setminus S})$ on $X$.
For any $t\, \in\, \mathbb R$, let
$$
{\mathcal F}_t\, \subset\, \iota_*((V^*_0\otimes W_0)\vert_{X
\setminus S})
$$
be the quasi--coherent subsheaf generated by all
$V^*_\alpha\bigotimes W_{\alpha+t}$, $\alpha\, \in\, \mathbb R$.
This ${\mathcal F}_t$ is a coherent sheaf, and
the collection $\{{\mathcal F}_t\}_{t\in \mathbb R}$ satisfies the
three conditions needed to define a parabolic vector bundle
with parabolic structure over $S$.

The parabolic vector bundle defined by $\{{\mathcal F}_t\}_{t\in
\mathbb R}$ is denoted by ${\mathcal H}om(V_*\, , W_*)$.

Let $L^0_*$ be the trivial line bundle ${\mathcal O}_X$ with trivial 
parabolic structure
(there is no nonzero parabolic weight). Note that the
sheaf for $t\, \in\, \mathbb R$
corresponding to this parabolic line bundle
is ${\mathcal O}_X([-t]D)$, so the
filtration is $\{{\mathcal O}_X([-t]D)\}_{t\in \mathbb R}$.
The \textit{parabolic dual} of $V_*$ is defined to be
$$
V^*_*\, :=\, {\mathcal H}om(V_*\, , L^0_*)\, .
$$
If $\{U_t\}_{t\in \mathbb R}$ is the filtration corresponding to
the parabolic vector bundle $V^*_*$, then
$U_t\, =\, (V_{\epsilon -t-1})^*$, where $\epsilon$ is a
sufficiently small positive real number.

We also note that ${\mathcal H}om(V_*\, , W_*)\,=\, W_*\bigotimes 
V^*_*$. (See \cite{Bis}, \cite{Yo}.)

\subsection{Orthogonal and symplectic parabolic
bundles}\label{se2.3}

Fix a parabolic line bundle $L_*$.

Let $E_*$ be a parabolic vector bundle, and let
\begin{equation}\label{e4}
\varphi\, :\, E_*\otimes E_* \, \longrightarrow\, L_*
\end{equation}
be a homomorphism of parabolic bundles. Tensoring both
sides of the above homomorphism with the parabolic dual $E^*_*$ 
we get a homomorphism
\begin{equation}\label{e5}
\varphi\otimes\text{Id}\, :\, 
E_*\otimes E_* \otimes E^*_*\, \longrightarrow\, L_*\otimes E^*_*\, .
\end{equation}
We note that the sheaf of sections of the vector bundle
underlying $E_* \otimes E^*_*$ is the sheaf of
endomorphisms of $E$ preserving the quasi--parabolic
flags. Sending any locally defined function
$h$ to the locally defined endomorphism of $E$ given by
pointwise multiplication with $h$, the trivial line bundle ${\mathcal 
O}_X$ equipped with the trivial parabolic structure (meaning there
is no nonzero parabolic weight) is realized
as a parabolic subbundle of $E_* \otimes E^*_*$. In fact, 
this line subbundle ${\mathcal O}_X\, \subset\, E_* \otimes E^*_*$ is
a direct summand of the subbundle of $E_* \otimes E^*_*$ defined
by the sheaf of parabolic endomorphisms of trace zero. Let
\begin{equation}\label{e6}
\widetilde{\varphi}\, :\, 
E_* \, \longrightarrow\, L_*\otimes E^*_*
\end{equation}
be the homomorphism defined by the composition
$$
E_*\, =\, E_*\otimes {\mathcal O}_X\, \hookrightarrow\, E_*\otimes
(E_* \otimes E^*_*)\,=\, (E_*\otimes E_*) \otimes E^*_* \, \stackrel{
\varphi\otimes \text{Id}}{\longrightarrow}\, L_*\otimes E^*_*\, .
$$

\begin{definition}\label{def1}
{\rm A} parabolic symplectic bundle {\rm is a pair $(E_*\, ,
\varphi)$ of the above form such that $\varphi$ is
anti--symmetric, and the homomorphism $\widetilde\varphi$
in \eqref{e6} is an isomorphism.}

{\rm A} parabolic orthogonal bundle {\rm is a pair $(E_*\, , 
\varphi)$ of the above form such that $\varphi$ is
symmetric, and the homomorphism $\widetilde\varphi$ is an
isomorphism.}
\end{definition}

\subsection{Equivalence with other definitions in case of rational
parabolic weights}

When all the parabolic weights are rational, principal bundles with
parabolic structure were defined in \cite{BBN}, \cite{BBN1}.
We will show that Definition \ref{def1} coincides with the
definition in \cite{BBN}, \cite{BBN1} when the parabolic weights
are rational.

In this subsection we assume that all the
parabolic weights are rational numbers.

We recall that there is a natural correspondence between parabolic
vector bundles on $X$ and orbifold vector bundles after we fix a 
suitable ramified Galois covering of $X$ depending on the common
denominator of the parabolic weights \cite{Bi2}. This correspondence
takes the dual of a parabolic vector bundle $E_*$ to the usual dual
of the orbifold vector bundle corresponding to $E_*$; it takes the
tensor product of two parabolic vector bundles to the
usual tensor product of the corresponding orbifold vector bundles.

Therefore, if $(E_*\, , \varphi)$ is a parabolic symplectic or
orthogonal vector bundle (see Definition \ref{def1}), then $\varphi$ 
induces a bilinear form $\varphi'$ on the orbifold vector 
bundle $E'$ corresponding to the parabolic vector bundle $E_*$.
This form $\varphi'$ takes values
in the orbifold line bundle $L'$ corresponding to the parabolic 
line bundle $L_*$. We note that $\varphi'$ is nondegenerate because
$\widetilde{\varphi}$ in \eqref{e6} is an isomorphism.
Therefore, $(E'\, , \varphi')$ is an
orbifold symplectic or orthogonal vector bundle depending on
whether $(E_*\, , \varphi)$ is symplectic or orthogonal.
Hence $(E_*\, , \varphi)$ defines a principal parabolic bundle
in the sense of \cite{BBN}, \cite{BBN1} (see \cite[pp. 350--351,
Theorem 4.3]{BBN}, \cite[p. 124, Theorem 1.1]{BBN1}). The converse
also follows similarly.

\subsection{Adjoint bundle}

Let $E_*$ be a parabolic vector bundle over $X$; the vector bundle
underlying this parabolic vector bundle will be denoted by $E$. Let
$U$ be a Zariski open subset of $X$, and let
$$
T\, :\, E\vert_U\, \longrightarrow\, E\vert_U
$$
be an ${\mathcal O}_U$--linear homomorphism. This homomorphism
is called \textit{parabolic} if for every $x_i\, \in\, U\bigcap S$,
$$
T(F_{i,j})\, \subset\, F_{i,j}
$$
for every $j\, \in\, [1\, , a_i]$ (see \eqref{e2}). Let
$$
End^p(E_*)\, \subset\, End(E)\, :=\, E\otimes E^*
$$
be the coherent subsheaf defined by the sheaf of all parabolic
endomorphisms of $E$. As mentioned in Section
\ref{se2.3}, $End^p(E_*)$ is the
vector bundle underlying the parabolic vector bundle $E_*\otimes
E^*_*$.

Now take a homomorphism $\varphi\, :\, E_*\otimes E_*\, 
\longrightarrow\, L_*$ such that $(E_*\, ,\varphi)$ is a symplectic
or orthogonal parabolic bundle. Let $L$ be the line bundle 
underlying the parabolic bundle $L_*$. Since the vector bundle
underlying $E_*\otimes E_*$ contains $E\otimes E$ as a subsheaf, 
the homomorphism $\varphi$ gives a homomorphism
$$
\varphi_0\, :\, E\otimes E \, \longrightarrow\, L\, .
$$
Let
\begin{equation}\label{ad}
\text{ad}(E_*\, ,\varphi)\, \subset\, End^p(E_*)
\end{equation}
be the subbundle generated by the sheaf of homomorphisms
$$
T\, :\, E\, \longrightarrow\, E
$$
lying in $End^p(E_*)$ such that
$$
\varphi_0(T(\alpha)\otimes\beta)+\varphi_0(\alpha\otimes
T(\beta))\,=\, 0
$$
for all locally defined sections $\alpha$ and $\beta$ of $E$.

The vector bundle $\text{ad}(E_*\, ,\varphi)$ defined in \eqref{ad}
will be called the \textit{adjoint} bundle of $(E_*\, ,\varphi)$.

The rank of $\text{ad}(E_*\, ,\varphi)$
is the dimension of the orthogonal or symplectic group
corresponding to $\varphi$. Let
$$
End^0(E)\, \subset\ End(E)
$$
be the subbundle of corank one defined
by the sheaf of endomorphisms of $E$ of trace zero.
Since $\varphi\vert_{X\setminus S}$ is nondegenerate, we have
$$
\text{ad}(E_*\, ,\varphi)\vert_{X\setminus S}\, \subset\,
End^0(E)\vert_{X\setminus S}\, .
$$
Since $X\setminus S$ is Zariski open, this implies that
\begin{equation}\label{in}
\text{ad}(E_*\, ,\varphi)\, \subset\,End^0(E)\, .
\end{equation}

We will give another description of the subbundle $\text{ad}(E_*\, 
,\varphi)$. For any locally defined parabolic homomorphism 
$T\, :\, E_*\, \longrightarrow\, E_*$, we have the dual homomorphism
$$
T^*\, :\, E^*_*\, \longrightarrow\, E^*_*\, .
$$
The section $T$ of $End^p(E_*)$ lies in $\text{ad}(E_*\, ,\varphi)$
if and only if the following diagram is commutative
$$
\begin{matrix}
E_* & \stackrel{\widetilde\varphi}{\longrightarrow} & E^*_*\otimes L_*\\
~\Big\downarrow T &&
~\,~\,~\,~\,~\,~\,~\,~\,\Big\downarrow   T^*\otimes\text{Id}_L\\
E_* & \stackrel{\widetilde\varphi}{\longrightarrow} & E^*_*\otimes L_*
\end{matrix}
$$
where $\widetilde\varphi$ is the isomorphism in \eqref{e6}.

Equip the line bundle ${\mathcal O}_X(S)$ with the trivial parabolic
structure (so there is no nonzero
parabolic weight). Consider the parabolic vector bundle 
$\text{ad}(E_*\, ,\varphi)\otimes{\mathcal O}_X(S)$. Let
\begin{equation}\label{ad2}
\text{ad}^0(E_*\, ,\varphi)\, \subset\, \text{ad}(E_*\, ,\varphi)
\otimes{\mathcal O}_X(S)
\end{equation}
be the coherent subsheaf defined by the sheaf of all locally
defined sections 
$$
T\, :\, E_*\, \longrightarrow\, E_*\otimes {\mathcal O}_X(S)
$$
of $\text{ad}(E_*\, ,\varphi)\otimes{\mathcal O}_X(S)$ such that
$T(F_{i,j})\, \subset\, F_{i,j+1}$ for all $x_i\,\in\, S$ in the domain
of $T$ and all $j\, \in\, [1\, ,a_i]$ (see \eqref{e2}).

Let $tr \, :\, End(E)\otimes End(E)\, \longrightarrow\, {\mathcal O}_X$ 
be
the homomorphism defined by $A\otimes B\, \longmapsto\,
\text{trace}(A\circ B)$. Consider the composition
$$
\text{ad}(E_*\, ,\varphi)\otimes \text{ad}^0(E_*\, ,\varphi)
\,\hookrightarrow\, End(E)\otimes (End(E)\otimes{\mathcal O}_X(S))
\, \stackrel{tr\otimes \text{Id}}{\longrightarrow}\,
{\mathcal O}_X(S)\, .
$$
The image of this composition homomorphism
is the subsheaf ${\mathcal O}_X\,
\subset\, {\mathcal O}_X(S)$, and the above pairing 
$$
\text{ad}(E_*\, ,\varphi)\otimes \text{ad}^0(E_*\, ,\varphi)
\,\longrightarrow\, {\mathcal O}_X
$$
is nondegenerate. Hence we get an isomorphism of vector bundles
\begin{equation}\label{g1}
\text{ad}^0(E_*\, ,\varphi)\, \stackrel{\sim}{\longrightarrow}\,
\text{ad}(E_*\, ,\varphi)^*\, .
\end{equation}

\section{Connection on parabolic orthogonal and symplectic 
bundles}

\subsection{Logarithmic connection}

Let $\Omega_X$ be the canonical line bundle of $X$.
A \textit{logarithmic
connection} on a holomorphic vector bundle $W\,\longrightarrow\,X$ 
singular 
over $S$ is a first order holomorphic differential operator
$$
D\, :\, W\, \longrightarrow\,W\otimes \Omega_X\otimes{\mathcal O}_X(S)
$$
satisfying the Leibniz identity which says that
$D(fs)\,=\, fD(s)+ s\otimes df$, where $f$ is a locally defined
holomorphic function and $s$ is a locally defined holomorphic
section of $W$. See \cite{De} for the details.

Since $S$ is fixed, we will often refrain from referring to $S$.
A logarithmic connection on $X$ will always mean that the singular
locus of the logarithmic connection is contained in $S$.

For notational convenience, the line bundle $\Omega_X\otimes
{\mathcal O}_X(S)$ will be denoted by $\Omega_X(S)$.
Take any point $x_i\, \in\, S$. Using the Poincar\'e adjunction formula,
the fiber $\Omega_X(S)_{x_i}$ is identified with $\mathbb 
C$. We recall that if
$f$ is a function defined on an open neighborhood of $x_i$ such
that $f(x_i)\,=\, 0$, and $df(x_i)\, \not=\, 0$, then the evaluation
of the section $(df)/f$ of $\Omega_X(S)$ at $x_i$ is $1$. For a
logarithmic connection $(V\, ,D)$, consider the composition
\begin{equation}\label{ec}
V\, \stackrel{D}{\longrightarrow}\, V\otimes 
\Omega_X(S)\,\longrightarrow
\, (V\otimes \Omega_X(S))_{x_i}\, =\, V_{x_i}\, ,
\end{equation}
where $V\otimes\Omega_X(S)\,\longrightarrow\, (V\otimes \Omega_X
(S))_{x_i}$ is the restriction map. This composition
is ${\mathcal O}_X$--linear due to the Leibniz identity, hence it
defines an endomorphism of the complex vector space $V_{x_i}$. This 
endomorphism is denoted by
$$
\text{Res}(D,x_i)\, \in\, \text{End}_{\mathbb C}(V_{x_i})\, ,
$$
and it is called the \textit{residue} of $D$ at $x_i$;
see \cite[p. 53]{De}.

Let $E_*\,=\, (E\, ,\{F_{i,j}\}\, ,\{\alpha_{i,j}\})$ be a parabolic 
vector bundle. An \textit{algebraic connection} on
$E_*$ is a logarithmic connection $D$ on $E$ such that
for all $x_i\, \in\, S$, and all $j\, \in\, [1\, ,a_i]$
(see \eqref{e2} for $a_i$), the following two conditions hold:
\begin{equation}\label{cd1}
\text{Res}(D,x_i)(F_{i,j}) \, \subseteq\, F_{i,j}
\end{equation}
(this condition implies
that $\text{Res}(D,x_i)$ induces an endomorphism of the
quotient vector space $F_{i,j}/F_{i,j+1}$), and
\begin{equation}\label{cd2}
\text{Res}(D,x_i)\vert_{F_{i,j}/F_{i,j+1}}\,=\,
\alpha_{i,j}\cdot \text{Id}_{F_{i,j}/F_{i,j+1}}\, ,
\end{equation}
where $\alpha_{i,j}$ is the parabolic weight in \eqref{e3}.

\begin{lemma}\label{lem1}
If $E_*$ admits an algebraic connection, then $\text{\rm 
par-deg}(E_*)\,=\, 0$.
\end{lemma}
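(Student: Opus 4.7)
The plan is to use the well-known residue formula for logarithmic connections: for any logarithmic connection $D$ on a vector bundle $V \longrightarrow X$ with polar set contained in $S$,
$$
\mathrm{degree}(V) + \sum_{i=1}^n \mathrm{trace}\bigl(\mathrm{Res}(D,x_i)\bigr) \,=\, 0.
$$
This is standard (see e.g.\ Deligne \cite{De}): it follows from computing $c_1(V)$ by picking any $C^\infty$ extension of $D$ to a connection on $V$ and integrating the curvature, where the logarithmic singularities contribute precisely $\mathrm{trace}\,\mathrm{Res}(D,x_i)$ at each $x_i \in S$. I will simply invoke this identity for $V = E$ and $D$ the given algebraic connection on $E_*$.

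Next I would compute the trace of each residue from the data of the parabolic filtration. By \eqref{cd1}, the endomorphism $\mathrm{Res}(D,x_i) \in \mathrm{End}_{\mathbb C}(E_{x_i})$ preserves the flag $\{F_{i,j}\}$ in \eqref{e2}, so its trace can be read off on the successive quotients $F_{i,j}/F_{i,j+1}$. By \eqref{cd2}, the induced map on each quotient is $\alpha_{i,j} \cdot \mathrm{Id}_{F_{i,j}/F_{i,j+1}}$, and consequently
$$
\mathrm{trace}\bigl(\mathrm{Res}(D,x_i)\bigr) \,=\, \sum_{j=1}^{a_i} \alpha_{i,j} \cdot \dim\bigl(F_{i,j}/F_{i,j+1}\bigr).
$$

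Substituting this into the residue formula and comparing with the definition \eqref{pd} of parabolic degree yields $\mathrm{par\text{-}deg}(E_*) = 0$ immediately. There is no real obstacle here beyond quoting the residue identity; the only minor points to spell out are that $\mathrm{Res}(D,x_i)$ preserves the flag (so that its trace equals the sum of traces on graded pieces) and the bookkeeping identifying the resulting double sum with the parabolic correction term in \eqref{pd}.
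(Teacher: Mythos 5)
Your proof is correct and follows essentially the same route as the paper: both invoke the residue formula $\mathrm{degree}(E)+\sum_{i}\mathrm{trace}(\mathrm{Res}(D,x_i))=0$ (the paper cites Ohtsuki, you cite Deligne) and then compute each trace on the graded pieces of the flag using \eqref{cd1} and \eqref{cd2} to match the parabolic correction term in \eqref{pd}. No gaps.
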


\begin{proof}
Let $W$ be a vector bundle over $X$ equipped with a logarithmic
connection $D$ singular over $S$. Then
$$
\text{degree}(V)+\sum_{i=1}^n
\text{trace}(\text{Res}(D,x_i))\,=\, 0
$$
\cite[p. 16, Theorem 3]{Oh}. In view of this, the lemma follows
immediately from \eqref{cd2} and the definition of parabolic
degree given in \eqref{pd}.
\end{proof}

\begin{lemma}\label{lem0}
Let $E_*$ and $F_*$ be parabolic vector bundles equipped
with algebraic connections $D_E$ and $D_F$ respectively. Then
$D_E$ and $D_F$ together induce an algebraic connection on
$E_*\otimes F_*$. Also, $D_E$ induce an algebraic connection
on $E^*_*$.
\end{lemma}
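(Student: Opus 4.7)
The plan is to construct the induced connections using the standard Leibniz-type formulas and then verify the parabolic residue conditions \eqref{cd1}--\eqref{cd2} via the Maruyama--Yokogawa filtration description of $E_* \otimes F_*$ and $E^*_*$.

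For the tensor product, I would set
$$
D \,:=\, D_E \otimes \text{Id}_F + \text{Id}_E \otimes D_F
$$
on $E \otimes F$. The Leibniz identity is a direct check, and a routine residue computation gives
$$
\text{Res}(D, x_i) \,=\, \text{Res}(D_E, x_i) \otimes \text{Id} + \text{Id} \otimes \text{Res}(D_F, x_i).
$$
Using the filtration description, $D_E$ and $D_F$ extend canonically to logarithmic connections on each $V_\alpha$ and $W_s$ (the twist by $\mathcal{O}_X(-[t]S)$ in the definition of $V_t$ contributes the standard residue shift coming from the trivial connection $d$ on the twisting line bundle). The Leibniz formula then extends $D$ to a logarithmic connection on the full filtration $\{\mathcal{E}_t\}$ that defines $E_* \otimes F_*$. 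On each graded piece at $x_i$, the induced residue is the sum of a residue-eigenvalue from $D_E$ and one from $D_F$, combined with the integer shift coming from the $\mathcal{O}_X(\pm S)$-twists built into the MY filtration; the result agrees with the parabolic weight of that graded piece of $E_* \otimes F_*$. The flag-preservation condition \eqref{cd1} holds because both residues preserve their respective flags.

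For the dual, I would define $D_{E^*}$ on $E^*$ through the classical duality
$$
d\langle s, \sigma\rangle \,=\, \langle D_E s, \sigma\rangle + \langle s, D_{E^*}\sigma\rangle,
$$
which yields a logarithmic connection with $\text{Res}(D_{E^*}, x_i) = -\text{Res}(D_E, x_i)^*$. The adjoint residue preserves the flag of annihilators of $\{F_{i,j}\}$ in $E^*_{x_i}$. Combined with the filtration formula $U_t = (V_{\epsilon - t - 1})^*$ for the parabolic dual, which encodes both the dual flag and an integer shift of one, the residue eigenvalues of $D_{E^*}$ on the graded pieces of $E^*_*$ at $x_i$ come out to $1 - \alpha^E_{i,j}$, matching the parabolic weights of the dual.

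The main obstacle is the bookkeeping for the tensor product: tracking precisely where the MY filtration imposes integer twists by $\mathcal{O}_X(\pm x_i)$ in order to keep parabolic weights in $[0,1)$, and confirming that these twists contribute exactly the compensating integer shifts to the residue of the Leibniz-rule connection. The cleanest way to manage this is to work at the level of generators $V_\alpha \otimes W_{t-\alpha}$ of the filtration $\{\mathcal{E}_t\}$, verifying the residue identities on generators, rather than attempting to describe $\mathcal{E}_0$ as an explicit subsheaf of $\iota_*((E\otimes F)\vert_{X\setminus S})$ and splitting into cases based on whether individual weight sums lie below or above $1$.
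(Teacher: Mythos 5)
Your proposal is correct and follows essentially the same route as the paper: the paper also takes the Leibniz-rule connections on $E\otimes F$ and $E^*$, twists by the de Rham connection on ${\mathcal O}_X(S)$, and observes that the underlying sheaves of $E_*\otimes F_*$ and $E^*_*$ sit inside $E\otimes F\otimes{\mathcal O}_X(S)$ and $E^*$ respectively, leaving the residue bookkeeping as ``straightforward to check.'' Your version is in fact more explicit than the paper's about how the integer shifts from the ${\mathcal O}_X(\pm S)$-twists reconcile the residues with the reduced-mod-one parabolic weights (just take care that on a graded piece of weight $0$ the dual weight is $0$, not $1$).
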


\begin{proof}
Let $E$ and $F$ be the vector bundles underlying $E_*$ and
$F_*$ respectively.
The logarithmic connections $D_E$ on $E$ induces a logarithmic 
connection on the dual vector bundle $E^*$; this logarithmic
connection on $E^*$ will be denoted by $D'_E$.
Let $E^*_0$ be the vector bundle underlying the parabolic vector
bundle $E^*_*$. This $E^*_0$ is a subsheaf of $E^*$.
It is straight--forward to check that
the logarithmic connection $D'_E$ on $E^*$
produces a logarithmic connection on $E^*_0$, and the resulting
logarithmic connection on $E^*_0$ is an algebraic connection on the
parabolic vector bundle $E^*_*$.

The two logarithmic connections $D_E$ and $D_F$ together define 
a logarithmic connection on $E\otimes F$.
The line bundle ${\mathcal O}_X(S)$ has a
natural logarithmic connection given by the de Rham differential
$f\, \longmapsto\, df$. This logarithmic connection on
${\mathcal O}_X(S)$ and the
above logarithmic connection on $E\otimes F$
together define a logarithmic connection on $E\otimes F\otimes
{\mathcal O}_X(S)$.

The vector bundle $(E_*\otimes F_*)_0$ underlying the parabolic
tensor product $E_*\otimes F_*$ is a subsheaf of $E\otimes F\otimes
{\mathcal O}_X(S)$. It is straight--forward to check that the
above logarithmic connection on $E\otimes F\otimes
{\mathcal O}_X(S)$ produces a logarithmic connection on
$(E_*\otimes F_*)_0$, and the resulting logarithmic 
connection on $(E_*\otimes F_*)_0$ is an algebraic connection on the
parabolic vector bundle $E_*\otimes F_*$.
\end{proof}

It is known that a parabolic line bundle $E_*$ on $X$ of degree zero
admits an algebraic connection. In fact $E_*$
has a unique unitary flat connection \cite{Si}, \cite{Po}, \cite{Bi}.
We include a simple proof.

\begin{lemma}\label{lem2}
Let $E_*$ be a parabolic line bundle with $\text{\rm par-deg}
(E_*)\,=\, 0$. Then $E_*$ admits an algebraic connection.
\end{lemma}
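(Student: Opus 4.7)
The plan is to produce a logarithmic connection $D$ on the line bundle $E$ underlying $E_*$ with $\mathrm{Res}(D,x_i)=\alpha_i$ for every $i$. Since $E$ has rank one, the quasi--parabolic flag at each $x_i$ is trivial, so condition \eqref{cd1} is automatic and \eqref{cd2} reduces exactly to $\mathrm{Res}(D,x_i)=\alpha_i$; any such $D$ is therefore an algebraic connection on $E_*$.

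I would first exhibit \emph{some} logarithmic connection $D_0$ on $E$ with singular locus in $S$. When $S\neq\emptyset$ this is automatic: the obstruction to the existence of a logarithmic connection on a line bundle lies in $H^1(X,\Omega_X(S))$, and this group vanishes by Serre duality because $H^0(X,\mathcal{O}_X(-S))=0$. (When $S=\emptyset$ the hypothesis $\deg(E)=0$ gives a holomorphic connection via Atiyah--Weil.) Set $\beta_i:=\mathrm{Res}(D_0,x_i)$. By the residue theorem \cite[p.\,16, Theorem 3]{Oh} already invoked in Lemma \ref{lem1}, $\sum_i\beta_i=-\deg(E)=\sum_i\alpha_i$, so the numbers $\gamma_i:=\alpha_i-\beta_i$ satisfy $\sum_i\gamma_i=0$.

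Next I would adjust the residues by adding a one--form. The set of logarithmic connections on $E$ singular over $S$ is an affine space over $H^0(X,\Omega_X(S))$, and $D_0+\omega$ has residue at $x_i$ equal to $\beta_i+\mathrm{Res}_{x_i}(\omega)$. From the residue short exact sequence
$$0\longrightarrow\Omega_X\longrightarrow\Omega_X(S)\stackrel{\mathrm{Res}}{\longrightarrow}\bigoplus_{i=1}^n\mathbb{C}_{x_i}\longrightarrow 0,$$
together with the identification $H^1(X,\Omega_X)\cong\mathbb{C}$ given by the sum of residues (Serre duality), the image of $\mathrm{Res}\colon H^0(X,\Omega_X(S))\to\mathbb{C}^n$ is exactly the hyperplane $\{(r_i):\sum_i r_i=0\}$. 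Hence some $\omega\in H^0(X,\Omega_X(S))$ has $\mathrm{Res}_{x_i}(\omega)=\gamma_i$ for all $i$, and then $D:=D_0+\omega$ is the required algebraic connection on $E_*$.

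The only mildly delicate point is the surjectivity of the residue map onto the hyperplane $\{\sum r_i=0\}$ used in the last step; it is standard and follows immediately from the cohomology sequence above. All remaining ingredients (existence of $D_0$, the residue theorem, and the torsor structure) are routine.
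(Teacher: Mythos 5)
Your proof is correct, and it reaches the same key mechanism as the paper --- adjusting residues by a global logarithmic one--form whose existence comes from the exact sequence $0\to\Omega_X\to\Omega_X(D')\to\bigoplus\mathbb{C}\to 0$ together with $H^1(X,\Omega_X)\cong\mathbb{C}$, with $\text{par-deg}(E_*)=0$ entering precisely through the vanishing of the total residue sum --- but you arrive at the starting connection differently. The paper writes $E=\mathcal{O}_X(\Delta_E)$ for an explicit divisor $\Delta_E=\sum y_j-\sum z_k$ and uses the tautological connection $f\mapsto df$, which is singular at the auxiliary points $y_j,z_k$ with integer residues; it must then find a correcting form in the larger line bundle $\mathcal{L}$ of \eqref{lb} with prescribed residues at \emph{all} of $S\cup\{y_j\}\cup\{z_k\}$. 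You instead obtain an initial logarithmic connection $D_0$ singular only over $S$ by observing that the obstruction class lives in $H^1(X,\Omega_X(S))$, which vanishes by Serre duality when $S\neq\emptyset$; your correction term then lives in the smaller space $H^0(X,\Omega_X(S))$ and only the residues over $S$ need adjusting. Your route is cleaner in that it avoids the auxiliary points and the choice of $\Delta_E$, at the cost of invoking the cohomological existence argument for $D_0$ and a separate (trivial) case when $S=\emptyset$; the paper's explicit divisor construction sidesteps both. All the individual steps you flag --- the torsor structure, the residue theorem from \cite{Oh}, and the surjectivity of the residue map onto the hyperplane $\{\sum_i r_i=0\}$ --- are correctly justified.
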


\begin{proof}
For any $x_i\, \in\, S$, let
$0\, \leq\, \lambda_i\, < \,1$ be the parabolic weight
of $E_*$ over $x_i$. Let $d$ be the degree of the vector
bundle $E$ underlying $E_*$. So,
\begin{equation}\label{d}
\text{par-deg}(E_*)\,=\, d +\sum_{i=1}^n \lambda_i\,=\, 0\, .
\end{equation}
An algebraic connection on $E_*$ is a logarithmic connection on
$E$ with residue $\lambda_i$ at each point $x_i\,\in\, S$.

Fix a divisor $\Delta_E\, =\, \sum_{j=1}^{m+d}y_j - \sum_{k=1}^{m} z_k$
such that $E\,=\, {\mathcal O}_X(\Delta_E)$. We may, and we will,
assume that $x_i$, $y_j$ and $z_k$ are all distinct points.
As mentioned before, ${\mathcal O}_X(\Delta_E)$ has a
tautological logarithmic connection ${\mathcal D}_0$ defined by the
de Rham differential defined by $f\, \longmapsto\, df$. This
logarithmic connection ${\mathcal D}_0$ is singular over the
points $\{y_j\}_{j=1}^{m+d}$ and $\{z_k\}_{j=1}^{m}$, and its
residue over each $y_j$ is $-1$ and its residue over each $z_k$ is
$1$. To prove that $E_*$ admits an algebraic connection, it suffices to 
produce a section $\omega$ of the line bundle
\begin{equation}\label{lb}
{\mathcal L}\, :=\,
\Omega_X\otimes {\mathcal O}_X(S)\otimes {\mathcal O}_X(\sum_{j=1}^{m+d}
y_j) \otimes {\mathcal O}_X(\sum_{k=1}^{m} z_k)
\end{equation}
such that the residue of $\omega$ over every $x_i\, \in\, S$ is 
$\lambda_i$, over each $y_j$ is $1$ and over each $z_k$ is $-1$.
Indeed, the logarithmic connection ${\mathcal D}_0+\omega$
on $E$, where $\omega$ is a section of the line bundle
${\mathcal L}$ in \eqref{lb}
satisfying the above residue conditions, is an algebraic
connection on $E_*$.

To construct such a section $\omega$,
consider the short exact sequence of coherent sheaves on $X$
$$
0\,\longrightarrow\, \Omega_X\,\longrightarrow\,{\mathcal L}
\,\longrightarrow\, {\mathcal L}\vert_{S+\sum_{j=1}^{m+d}
y_j+\sum_{k=1}^{m} z_k} \,\longrightarrow\, 0\, .
$$
Let
\begin{equation}\label{le}
H^0(X,\, {\mathcal L}) \,\longrightarrow\,
(\bigoplus_{x_i\in S} {\mathcal L}\vert_{x_i})\oplus
(\bigoplus_{j=1}^{m+d} {\mathcal L}\vert_{y_j})\oplus
(\bigoplus_{k=1}^{m} {\mathcal L}\vert_{z_k})
\,\longrightarrow\, H^1(X,\, \Omega_X)\,=\, \mathbb C
\end{equation}
be the corresponding long exact sequence of cohomologies. From
\eqref{le} it follows that ${\mathcal L}$ has a section with
the given residues over $S$, $\{y_j\}_{j=1}^{m+d}$ and
$\{z_k\}_{j=1}^{m}$ as long as the sum of all the residues is zero.
Therefore, from \eqref{d} we conclude that there
is a section $\omega$ such that the residue of $\omega$ over each
$x_i\, \in\, S$ is
$\lambda_i$, over each $y_j$ is $1$ and over each $z_k$ is $-1$.
This completes the proof of the lemma.
\end{proof}

\subsection{Definition of an algebraic connection}\label{sec3.2}

Let $E_*\,=\, (E\, ,\{F_{i,j}\}\, ,\{\alpha_{i,j}\})$ be a parabolic
vector bundle of rank $r$. Let $L_*$ be a parabolic line bundle.
Let
$$
\varphi\, :\, E_*\otimes E_*\, \longrightarrow\, L_*
$$
be an orthogonal or symplectic parabolic structure. We have
$$
\text{par-deg}(L_*\otimes E^*_*)\,=\,
r\cdot \text{par-deg}(L_*)+ \text{par-deg}(E^*_*)\,=\,
r\cdot\text{par-deg}(L_*)- \text{par-deg}(E_*)\, .
$$
Hence from the
isomorphism $\widetilde{\varphi}$ in \eqref{e6} it follows that
\begin{equation}\label{iii}
r\cdot \text{par-deg}(L_*)\,=\, 2\cdot \text{par-deg}(E_*)\, .
\end{equation}
Therefore,
\begin{equation}\label{ii}
\text{par-deg}(L_*)\,=\, 0 \,\Longleftrightarrow\, 
\text{par-deg}(E_*)\,=\, 
0\, .
\end{equation}

In this subsection we assume that $\text{par-deg}(L_*)
\,=\, 0$.

Since $\text{par-deg}(L_*)\,=\, 0$, from Lemma \ref{lem2} we know
that $L_*$ has an algebraic connection. Fix an algebraic connection 
$D_L$ on $L_*$.

As before, let $(E_*\, , \varphi)$ be a symplectic or orthogonal
parabolic bundle. Let $D$ be an algebraic
connection on the parabolic vector bundle $E_*$.
The algebraic connection $D$ on $E_*$ induces an algebraic
connection on $E^*_*$. This induced algebraic connection
on $E^*_*$ and the algebraic connection $D_L$ on $L_*$
together produce an algebraic connection on $E^*_*\otimes L_*$
(see Lemma \ref{lem0}). 

\begin{definition}\label{def2}
{\rm The algebraic connection on $D$ on $E_*$ is said to be}
compatible {\rm with $\varphi$ if the isomorphism
$\widetilde{\varphi}\, :\, E_*\, \longrightarrow\, L_*\otimes E^*_*$
in \eqref{e6} takes the algebraic connection $D$
on $E_*$ to the algebraic connection on $L_*\otimes E^*_*$
constructed above from $D$ and $D_L$.}

{\rm An} algebraic connection {\rm on $(E_*\, , \varphi)$ is an
algebraic connection on $E_*$ compatible with $\varphi$}.
\end{definition}

Let $D$ be an algebraic connection on the parabolic vector
bundle $E_*$. We will describe a
criterion for $D$ to be compatible with $\varphi$. 

Let $D'$ be the algebraic connection on $E_*\otimes E_*$
induced by $D$ (see Lemma \ref{lem0}). The algebraic connection 
$D$ is compatible with $\varphi$ if and only if the homomorphism 
$\varphi$ intertwines $D'$ and $D_L$ (the given algebraic
connection on $L_*$).

The algebraic connections $D$ and $D_L$ together produce an
algebraic connection
on the parabolic tensor product $L_*\otimes E^*_*\otimes E^*_*$.
On the other hand, $\varphi$ defines a section of
$L_*\otimes E^*_*\otimes E^*_*$. The homomorphism $\varphi$
intertwines $D'$ and $D_L$ if and only if the section of
$L_*\otimes E^*_*\otimes E^*_*$ defined by $\varphi$
is flat with respect to the algebraic
connection on $L_*\otimes E^*_*\otimes E^*_*$ constructed
using $D$ and $D_L$.

If $D_1$ and $D_2$ are algebraic connections on $(E_*\, , \varphi)$, 
then
$$
D_1-D_2\,\in\, H^0(X,\, \text{ad}^0(E_*\, ,\varphi)
\otimes\Omega_X)\, ,
$$
where $\text{ad}^0(E_*\, ,\varphi)$ is the vector bundle
in \eqref{ad2}. Conversely, for any algebraic connection $D$ on
$(E_*\, , \varphi)$, and for any
$$
\theta\, \in\, H^0(X,\, {\rm ad}^0(E_*\, ,\varphi)\otimes\Omega_X)\, ,
$$
their sum
$D+\theta$ is also an algebraic connection on $(E_*\, , \varphi)$. 
Therefore, the following holds:

\begin{lemma}\label{lem3}
The space of all algebraic connections on $(E_*\, , \varphi)$ is an
affine space for the vector space $H^0(X,\, {\rm ad}^0(E_*\, 
,\varphi)\otimes\Omega_X)$.
\end{lemma}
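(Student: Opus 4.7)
The plan is to verify the two assertions made in the paragraph immediately preceding the lemma; together they say precisely that the set of algebraic connections on $(E_*\, , \varphi)$ is an affine space modeled on $H^0(X,\, \text{ad}^0(E_*\, ,\varphi)\otimes\Omega_X)$. I would proceed by a direct sheaf-theoretic computation, using the residue description of an algebraic connection together with Definition \ref{def2}.

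First I would take two algebraic connections $D_1, D_2$ on $(E_*\, , \varphi)$ and set $\theta := D_1 - D_2$. The Leibniz rule gives that $\theta$ is $\mathcal{O}_X$-linear, so $\theta$ is a global section of $\text{End}(E)\otimes\Omega_X(S)$. Applying condition \eqref{cd2} to both $D_i$ forces $\text{Res}(\theta, x_i)$ to act as zero on every graded piece $F_{i,j}/F_{i,j+1}$, which is equivalent to $\text{Res}(\theta, x_i)(F_{i,j}) \subseteq F_{i,j+1}$; this is precisely the strict-parabolic condition in the definition \eqref{ad2} of $\text{ad}^0(E_*\, ,\varphi)$. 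Finally, since both $D_1$ and $D_2$ are compatible with $\varphi$, each intertwines $\widetilde\varphi$ with the connection on $L_*\otimes E^*_*$ built from $D_L$; subtracting, $\theta$ annihilates $\widetilde\varphi$, which via the commutative-diagram criterion appearing just after \eqref{in} places $\theta$ in $\text{ad}(E_*\, ,\varphi)\otimes\Omega_X(S)$. Combining the flag condition with the adjoint condition produces $\theta \in H^0(X,\, \text{ad}^0(E_*\, ,\varphi)\otimes\Omega_X)$.

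Conversely, I would fix an algebraic connection $D$ on $(E_*\, , \varphi)$ and $\theta \in H^0(X,\, \text{ad}^0(E_*\, ,\varphi)\otimes\Omega_X)$ and check that $D + \theta$ satisfies all the requirements. The Leibniz rule is preserved under adding an $\mathcal{O}_X$-linear term. Because $\theta$ lies in $\text{ad}^0$, its residue preserves each $F_{i,j}$ and acts as zero on $F_{i,j}/F_{i,j+1}$, so $D+\theta$ inherits \eqref{cd1} and \eqref{cd2} from $D$. Since $\theta$ takes values in $\text{ad}(E_*\, ,\varphi)$, it annihilates the section of $L_*\otimes E^*_*\otimes E^*_*$ corresponding to $\varphi$, so compatibility in the sense of Definition \ref{def2} is retained.

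The principal bookkeeping obstacle is matching the Poincar\'e residue at $x_i$, in terms of which conditions \eqref{cd1}, \eqref{cd2} and Definition \ref{def2} are phrased, with the strict-parabolic sheaf-theoretic characterization used to define $\text{ad}^0$ in \eqref{ad2}, and pinning down the inclusion $\text{ad}^0(E_*\, ,\varphi)\otimes\Omega_X \hookrightarrow \text{ad}(E_*\, ,\varphi)\otimes\Omega_X(S)$ under which the computation takes place. Once this identification is in place, both directions reduce to routine local verifications at the points of $S$.
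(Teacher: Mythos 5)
Your proposal is correct and follows exactly the route the paper takes: the paper simply asserts, in the paragraph preceding the lemma, that the difference of two compatible algebraic connections lies in $H^0(X,\, {\rm ad}^0(E_*\, ,\varphi)\otimes\Omega_X)$ and that adding such a section to a compatible connection yields another one, and you have supplied the routine local verifications (Leibniz cancellation, the residue/flag conditions \eqref{cd1}--\eqref{cd2} matching the strictness condition in \eqref{ad2}, and the adjoint condition coming from compatibility with $\varphi$) that the paper leaves implicit. No gaps.
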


\subsection{Generalized logarithmic connection}

Let $U\, \subset\, X$ be a nonempty
Zariski open subset. The intersection
$U\bigcap S$ will be denoted by $S_U$. The canonical line bundle
of $U$ will be denoted by $\Omega_U$. Fix an algebraic function $w$
on $U$. Let $V$ be an algebraic vector bundle on $U$.

A \textit{generalized logarithmic connection on} $V$
\textit{with weight $w$} is an algebraic differential operator
$$
D\, :\, V\, \longrightarrow\, V\otimes \Omega_U(S_U)\, :=\,
V\otimes \Omega_U\otimes{\mathcal O}_U(S_U)
$$
satisfying the identity
\begin{equation}\label{f4}
D(fs)\,=\, fD(s)+ w{\cdot}s\otimes df\, ,
\end{equation}
where $f$ is a locally defined algebraic 
function, and $s$ is a locally defined algebraic section of $V$.

The identity in \eqref{f4} implies that the order of
the differential operator $D$ is at most one. The
order of $D$ is zero, meaning $D$ is ${\mathcal O}_U$--linear,
if and only if $w\, =\, 0$. We also note that $D$ is a logarithmic
connection over $U$ if $w\,=\,1$.

A \textit{generalized logarithmic connection} on $V\,\longrightarrow
\, U$ is a pair $(w\, , D)$, where $w$ is an algebraic function on $U$, 
and $D$ is a generalized logarithmic connection on $V$ with weight $w$.

Given any generalized logarithmic connection $(w\, , D)$ on
$V\, \longrightarrow\, U$,
for any point $x_i\, \in\, S_U$, consider the composition
$$
V\, \stackrel{D}{\longrightarrow}\, V\otimes\Omega_U\otimes
{\mathcal O}_U(S_U)\, =: \,V\otimes 
\Omega_U(S_U)\,\longrightarrow
\,(V\otimes \Omega_U(S_U))_{x_i}\, =\, V_{x_i}
$$
as in \eqref{ec}. It defines an endomorphism
$$
\text{Res}(D,x_i)\, \in\, \text{End}_{\mathbb C}(V_{x_i})\, ;
$$
this endomorphism will be
called the \textit{residue} of $D$ at $x_i$.

Let $E_*\,=\, (E\, ,\{F_{i,j}\}\, ,\{\alpha_{i,j}\})$ be a
parabolic vector bundle. Let $U\, \subset\, X$ be a
nonempty Zariski open subset.
A \textit{generalized algebraic connection} on $E_*\vert_U$
is generalized logarithmic connection $(w\, ,D)$ on $E\vert_U$ 
satisfying the following condition:
for any $x_i\, \in\, U\bigcap S$,
$$
\text{Res}(D,x_i)(F_{i,j}) \, \subseteq\, F_{i,j}~\,~\,
\text{~and~} ~\,~\, \text{Res}(D,x_i)\vert_{F_{i,j}/F_{i,j+1}}\,=\,
w(x_i)\cdot \alpha_{i,j}\cdot \text{Id}_{F_{i,j}/F_{i,j+1}}
$$
for all $j\, \in\, [1\, ,a_i]$; the first condition ensures that
$\text{Res}(D,x_i)$ induces an endomorphism of $F_{i,j}/F_{i,j+1}$.

\subsection{Connections and Atiyah exact sequence}

As in Section \ref{sec3.2}, we assume that
$\text{par-deg}(L_*)\,=\, 0$. We also fix an
algebraic connection $D_L$ on $L_*$.

Let $E_*\,=\, (E\, ,\{F_{i,j}\}\, ,\{\alpha_{i,j}\})$ be a parabolic
vector bundle, and let $(E_*\, ,\varphi)$ be an
orthogonal or symplectic parabolic bundle. Let $U\,
\subset\, X$ be a nonempty Zariski open subset, and let $w$ be a
function on $U$. Note that $w\cdot D_L$ is a generalized
logarithmic connection on $L\vert_U$ of weight $w$.

If $V$ (respectively, $W$) is a vector bundle on $U$
equipped with a generalized logarithmic connection $D_V$ (respectively, 
$D_W$) of weight $w$, then $D_V\otimes \text{Id}_W+ \text{Id}_V
\otimes D_W$ is a generalized logarithmic connection on
$V\otimes W$ of weight $w$. Also, $D_V$ induces a generalized 
logarithmic connection $D_{V^*}$ on
the dual vector bundle $V^*$ of weight $w$. This
$D_{V^*}$ is uniquely defined by the following identity:
$$
t(D_V(s)) + (D^*_V(t))(s) \,=\, w\cdot d(t(s))\, ,
$$
where $s$ and $t$ are locally defined sections of $V$ and $V^*$
respectively.

Consequently, for any generalized algebraic connection $D$ of
weight $w$ on the
parabolic vector bundle $E_*\vert_U$,
we have a generalized algebraic connection $D^*$ on $E^*_*\vert_U$
of weight $w$. Also, $D^*$ and $w\cdot D_L$ together produce
a generalized algebraic connection on $E^*_*\otimes L_*$ of weight $w$.

A generalized algebraic connection on $D$ of weight $w$ on the
parabolic vector bundle $E_*\vert_U$ is said to be \textit{compatible}
with $\varphi$ if the isomorphism $\widetilde\varphi$ in \eqref{e6}
takes $D$ to the generalized algebraic connection on $E^*_*\otimes
L_*$ constructed using $D$ and $w\cdot D_L$.

A \textit{generalized algebraic connection} on $(E_*\, , \varphi)$
over $U$ is a generalized algebraic connection on
the parabolic vector bundle $E_*$ compatible with $\varphi$.

Let
\begin{equation}\label{f5}
{\mathcal D}(E_*\, ,\varphi)
\end{equation}
be the sheaf on $X$ defined by the generalized algebraic
connections on $(E_*\, ,\varphi)$. 

We will show that ${\mathcal D}(E_*\, ,\varphi)$ defines an
algebraic vector bundle over $X$. For that
purpose, first note that
if $(w_1\, ,D_1)$ and $(w_2\, ,D_2)$ are generalized 
algebraic connections on
$(E_*\, ,\varphi)$ over $U$, then $(w_1+w_2\, ,D_1+D_2)$ is a
generalized algebraic connection on $(E_*\, ,\varphi)$ over
$U$. Also, if $w$ is a function on $U$, then
$(w\cdot w_1\, ,w\cdot D_1)$ is a generalized algebraic
connection on $(E_*\, ,\varphi)$ over $U$. Consequently,
${\mathcal D}(E_*\, ,\varphi)$ defines a coherent sheaf on $X$.
Clearly, the ${\mathcal O}_X$--module ${\mathcal D}(E_*\, ,\varphi)$
is torsionfree, and its rank coincides with $1+\text{rank}(
\text{ad}(E_*\, ,\varphi))$ (see \eqref{ad} for
$\text{ad}(E_*\, ,\varphi)$). Hence ${\mathcal 
D}(E_*\, 
,\varphi)$
is a vector bundle over $X$ of rank $1+\text{rank}(
\text{ad}(E_*\, ,\varphi))$.

Let
\begin{equation}\label{f7}
\eta\, :\, {\mathcal D}(E_*\, ,\varphi)\, \longrightarrow\, {\mathcal 
O}_X
\end{equation}
be the homomorphism defined by $(w\, , D)\, \longmapsto\,
w$. From the ${\mathcal O}_X$--module structure of ${\mathcal D}(E_*\, 
,\varphi)$ described above it follows immediately that $\eta$ is
${\mathcal O}_X$--linear. Also, $\eta$ is surjective.

The sheaf of generalized algebraic connections of weight zero on
$(E_*\, ,\varphi)$ coincides with the sheaf of sections of the
vector bundle $\text{ad}^0(E_*\, ,\varphi)\otimes
\Omega_X$, where $\text{ad}^0(E_*\, ,\varphi)$ is constructed
in \eqref{ad2}. Hence we have an inclusion
$$
\text{ad}^0(E_*\, ,\varphi)\otimes \Omega_X\,\hookrightarrow\,
{\mathcal D}(E_*\, ,\varphi)\, .
$$
Using this inclusion we get a short
exact sequence of vector bundles on $X$
\begin{equation}\label{f6}
0\, \longrightarrow\, \text{ad}^0(E_*\, ,\varphi)\otimes \Omega_X\, 
\longrightarrow\,
{\mathcal D}(E_*)\, \stackrel{\eta}{\longrightarrow}\,
{\mathcal O}_X \, \longrightarrow\, 0\, ,
\end{equation}
where $\eta$ is the homomorphism in \eqref{f7}.

The short exact sequence in \eqref{f6} is a twisted form of the
Atiyah exact sequence. More precisely, if the parabolic
structure on $E_*$ is trivial, then \eqref{f6} tensored with
$TX$ coincides  with the usual Atiyah exact sequence for the
corresponding orthogonal or symplectic bundle. We recall that
an algebraic connection on a principal bundle is an algebraic
splitting of the Atiyah exact sequence associated to the
principal bundle \cite{At}. Also, note that splittings of
a given short exact sequence are in bijective correspondence with
the splittings of the short exact sequence obtained by
tensoring the given exact sequence with some line bundle.

Recall the definition of an algebraic connection on $(E_*\, ,
\varphi)$ (see Definition \ref{def2}). If
$$
\sigma\, :\, {\mathcal O}_X\, \longrightarrow\, {\mathcal D}(E_*\, ,
\varphi)
$$
is a homomorphism such that $\eta\circ\sigma\,=\,
\text{Id}_{{\mathcal O}_X}$, where $\eta$ is the homomorphism
in \eqref{f7}, then the section $\sigma(1)$ of
${\mathcal D}(E_*\, ,\varphi)$ is an algebraic connection on $(E_*\, 
,\varphi)$; here $1$ is the section of ${\mathcal O}_X$ given by
the constant function $1$. Conversely, if $D$ is an algebraic
connection on $(E_*\, ,\varphi)$, then there is a unique homomorphism
$$
\sigma\, :\, {\mathcal O}_X\, \longrightarrow\, {\mathcal D}(E_*\, ,
\varphi)
$$
such that $\sigma(1)\,=\, D$. In other words, algebraic connections on 
$(E_*\, , \varphi)$ are the splitting of the short exact sequence in 
\eqref{f6}.

\section{Criterion for an algebraic connection}

We assume that $\text{par-deg}(L_*)\,=\, 0$. Fix an algebraic
connection $D_L$ on the parabolic line bundle $L_*$. Let
$$
(E_*\, ,\varphi)\,=\, ((E\, ,\{F_{i,j}\}\, ,\{\alpha_{i,j}\})\, ,
\varphi)
$$
be an orthogonal or symplectic parabolic bundle. In this section,
we will give a criterion for $(E_*\, ,\varphi)$ to admit an
algebraic connection.

\subsection{The case of symplectic bundles}

First assume that $\varphi$ is alternating. Let $2r$ be the rank of
$E$.

Take any parabolic vector bundle $V_*$. Using the natural pairing
of $V_*$ with the parabolic dual $V^*_*$, the parabolic
vector bundle $V_*\oplus (V^*_*\otimes L_*)$ is equipped with
a symplectic form with values in $L_*$. Let
$$
\varphi^a_{V_*}\, :\, (V_*\oplus (V^*_*\otimes L_*))\otimes
(V_*\oplus (V^*_*\otimes L_*))\, \longrightarrow\, L_*
$$
be this symplectic form on $V_*\oplus (V^*_*\otimes L_*)$.

\begin{theorem}\label{thm1}
The parabolic symplectic bundle $(E_*\, ,\varphi)$
admits an algebraic connection if and only if the 
following holds: For every parabolic vector bundle $V_*$ satisfying
the condition that there is a symplectic parabolic vector bundle $(W_*\, 
, \phi)$ such that
$$
(E_*\, ,\varphi)\,=\,((V_*\oplus (V^*_*\otimes L_*))\oplus W_*\, , 
\varphi^a_{V_*}\oplus \phi)\, ,
$$
we have
$$
\text{\rm par-deg}(V_*)\,=\, 0\, .
$$
\end{theorem}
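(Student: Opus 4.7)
The plan is to identify algebraic connections on $(E_*,\varphi)$ with splittings of the Atiyah-type exact sequence \eqref{f6} and then apply Serre duality to convert the splitting obstruction into a pairing condition against global sections of $\text{\rm ad}(E_*,\varphi)$. By the discussion at the end of Section \ref{sec3.2}, algebraic connections on $(E_*,\varphi)$ are precisely the ${\mathcal O}_X$--linear splittings of \eqref{f6}, and such a splitting exists if and only if the extension class $\Theta \in H^1(X,\, \text{\rm ad}^0(E_*,\varphi)\otimes\Omega_X)$ vanishes. Combining Serre duality on $X$ with the duality isomorphism \eqref{g1}, one has
$$
H^1(X,\, \text{\rm ad}^0(E_*,\varphi)\otimes\Omega_X)^{*}\;\cong\;H^0(X,\, \text{\rm ad}(E_*,\varphi)),
$$
so $\Theta = 0$ if and only if the pairing $\langle\Theta,T\rangle$ vanishes for every global section $T$ of $\text{\rm ad}(E_*,\varphi)$.

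The bridge to the geometric criterion is a dictionary between global sections of $\text{\rm ad}(E_*,\varphi)$ and symplectic decompositions $(E_*,\varphi) = ((V_* \oplus (V^*_*\otimes L_*))\oplus W_*,\, \varphi^a_{V_*}\oplus\phi)$. Given such a decomposition, the parabolic endomorphism $T$ equal to $+\text{Id}$ on $V_*$, $-\text{Id}$ on $V^*_*\otimes L_*$, and $0$ on $W_*$ lies in $\text{\rm ad}(E_*,\varphi)$, since the identity $\varphi_0(T\alpha,\beta) + \varphi_0(\alpha,T\beta) = 0$ holds because $V_*$ and $V^*_*\otimes L_*$ are mutually dual isotropic pieces for the symplectic form and $\varphi$ vanishes on $W_*$ paired with them. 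Conversely, any $T \in H^0(X, \text{\rm ad}(E_*,\varphi))$ has constant characteristic polynomial (its coefficients lie in $H^0(X,{\mathcal O}_X) = \mathbb{C}$), so its generalized eigenspaces are parabolic subsheaves of $E$; the symplectic-algebra condition pairs the $\lambda$- and $(-\lambda)$-eigenspaces dually under $\varphi$, yielding a decomposition of the stated form with $V_*$ assembled from the nonzero eigenspaces and $W_*$ coming from the generalized $0$-eigenspace.

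The final step is the trace computation: for the projector-type section $T$ associated with such a decomposition, the pairing equals $\langle\Theta,T\rangle = 2\cdot\text{\rm par-deg}(V_*)$, which is the parabolic refinement of the classical Weil/Atiyah trace formula---the parabolic-weight terms enter precisely because the definition \eqref{ad2} of $\text{\rm ad}^0$ accounts for residue contributions at $S$. The ``only if'' direction then follows by applying $\langle\Theta,T\rangle = 0$ to the projector of any given decomposition; the ``if'' direction follows because arbitrary sections $T$ decompose via the eigenspace description into linear combinations of projectors of this form, so the vanishing hypothesis on all $\text{\rm par-deg}(V_*)$ forces $\Theta = 0$. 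The main obstacle is the eigenspace argument: one must verify that the generalized eigenspaces of $T$ extend to honest parabolic subbundles with the correct induced parabolic structure (compatibility with the flags $F_{i,j}$ is automatic as $T$ is a parabolic section), handle non-semisimple $T$ either by perturbation inside $H^0(X,\text{\rm ad}(E_*,\varphi))$ or via the Jordan decomposition, and check that the resulting $V_{\lambda,*}$ combine to give a Lagrangian summand $V_*$ of the precise form demanded in the statement.
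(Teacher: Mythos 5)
Your overall architecture --- algebraic connections as splittings of \eqref{f6}, the obstruction class in $H^1(X,\mathrm{ad}^0(E_*,\varphi)\otimes\Omega_X)\cong H^0(X,\mathrm{ad}(E_*,\varphi))^*$ via Serre duality and \eqref{g1}, and the evaluation of that functional on global sections of the adjoint bundle --- is the same as the paper's, and your ``only if'' direction is essentially sound (the paper obtains it even more directly, by composing a connection with $i_V$ and $j_V$ to get an algebraic connection on $V_*$ and invoking Lemma \ref{lem1}, which avoids your unproved trace formula). The genuine gap is in the ``if'' direction, at exactly the step you flag as ``the main obstacle'': it is not true that an arbitrary $T\in H^0(X,\mathrm{ad}(E_*,\varphi))$ is a linear combination of projectors attached to decompositions $(V_*\oplus(V^*_*\otimes L_*))\oplus W_*$. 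Writing the (constant-coefficient) Jordan decomposition $T=T_s+T_n$, only the semisimple part $T_s$ is such a combination; the nilpotent part $T_n$ lives entirely in the generalized $0$--eigenspace picture, produces no decomposition and no parabolic degree, and can certainly be nonzero. So your hypothesis that $\text{par-deg}(V_*)=0$ for every decomposition gives $\langle\Theta,T_s\rangle=0$ but says nothing whatsoever about $\langle\Theta,T_n\rangle$. Perturbation inside $H^0(X,\mathrm{ad}(E_*,\varphi))$ cannot rescue this, since that finite-dimensional space may consist entirely of nilpotent elements --- indeed, after the paper's reduction to irreducible symplectic summands, that is precisely the situation.

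What is missing is the one nontrivial cohomological input of the paper's argument: the obstruction functional vanishes on every \emph{nilpotent} global section of the adjoint bundle. The paper gets this from the argument of Proposition 18(ii) of Atiyah \cite{At}, after first decomposing $(E_*,\varphi)$ into irreducible symplectic summands so that constancy of the characteristic polynomial together with the trace-zero property \eqref{in} forces every adjoint section of each summand to be nilpotent. Without supplying this vanishing statement, or some substitute for it, your ``if'' direction does not close. Secondarily, the identity $\langle\Theta,T\rangle=2\cdot\text{par-deg}(V_*)$ for the projector sections is asserted rather than proved; it is a believable residue computation in the spirit of Lemma \ref{lem1} and of \cite{AB}, but as stated it carries the full weight of your ``only if'' direction, so it would need to be established.
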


\begin{proof}
First assume that $(E_*\, ,\varphi)$ admits an algebraic connection.
Take a parabolic vector bundle $V_*$, and a symplectic 
parabolic vector bundle $(W_*\, , \phi)$, such that
$$
(E_*\, ,\varphi)\,=\,((V_*\oplus (V^*_*\otimes L_*))\oplus W_*\, ,
\varphi^a_{V_*}\oplus \phi)\, .
$$
Fix an isomorphism $\tau\, :\, (E_*\, ,\varphi)\, 
\longrightarrow\,((V_*\oplus (V^*_*\otimes 
L_*))\oplus W_*\, , \varphi^a_{V_*}\oplus \phi)$. Let
\begin{equation}\label{f1}
i_V\, :\, V_*\, \longrightarrow\, (V_*\oplus (V^*_*\otimes
L_*))\oplus W_* ~\,~\text{~and~}~\,~ j_V\, :=\,
(V_*\oplus (V^*_*\otimes L_*))\oplus W_* \, \longrightarrow\, V_*
\end{equation}
be the injection and projection respectively constructed using
$\tau$.

Let $D$ be an algebraic connection on $(E_*\, ,\varphi)$. Consider the 
composition
$$
V_*\, \stackrel{i_V}{\longrightarrow}\, (V_*\oplus (V^*_*\otimes
L_*))\oplus W\, \stackrel{D}{\longrightarrow}\,
((V_*\oplus (V^*_*\otimes
L_*))\oplus W)\otimes \Omega_X\otimes{\mathcal O}_X(S)
$$
$$
\stackrel{j_V\otimes\text{Id}_{\Omega_X\otimes{\mathcal O}_X
(S)}}{\longrightarrow}\,
V_*\otimes \Omega_X\otimes{\mathcal O}_X(S)\, .
$$
It is an algebraic connection on the parabolic vector bundle $V_*$.
Now from
Lemma \ref{lem1} we conclude that $\text{\rm par-deg}(V_*)\,=\, 0$.

To prove the converse, we will first show that it is enough to prove
the converse
under the assumption that $(E_*\, ,\varphi)$ is irreducible, meaning
it does not decompose into a direct sum of symplectic parabolic
vector bundles of positive rank.

We can write $(E_*\, ,\varphi)$ as a direct sum
$$
(E_*\, ,\varphi)\,=\, \bigoplus_{i=1}^n (E^i_*\, ,\varphi^i)\, ,
$$
where each $(E^i_*\, ,\varphi^i)$ is irreducible. If the
condition in the theorem holds for $(E_*\, ,\varphi)$, then it
holds for each $(E^i_*\, ,\varphi^i)$. If each symplectic parabolic 
vector bundle $(E^i_*\, ,\varphi^i)$
has an algebraic connection $D^i$, then $\bigoplus_{i=1}^n D^i$ is
an algebraic connection
on $(E_*\, ,\varphi)$. Therefore, it is enough to prove
the converse under the assumption that $(E_*\, ,\varphi)$ is 
irreducible.

We assume that $(E_*\, ,\varphi)$ is irreducible. Assume that
the condition in the theorem holds for $(E_*\, ,\varphi)$.

We will show that the short exact sequence in \eqref{f6} splits;
recall that any splitting of \eqref{f6} is an
algebraic connection on $(E_*\, ,\varphi)$.

Using Serre duality,
the obstruction to the splitting of \eqref{f6} is a cohomology
class
\begin{equation}\label{c}
c\, \in\, H^1(X,\, \text{ad}^0(E_*\, ,\varphi)\otimes \Omega_X)
\,=\, H^0(X,\,  \text{ad}(E_*\, ,\varphi))^*
\end{equation}
(see \eqref{g1}). We will investigate the functional $c$ of
$H^0(X,\,  \text{ad}(E_*\, ,\varphi))$.

Take any $A\, \in\, H^0(X,\,  \text{ad}(E_*\, ,\varphi))$.
So, $A$ is an endomorphism of $E$ compatible with $\varphi$ which 
preserves the quasi--parabolic flags for $E_*$. Consider the
characteristic polynomial of $A(x)$, $x\, \in \, X$. Since
there are no nonconstant algebraic functions on $X$, the
coefficients of the characteristic polynomial of $A(x)$ are
independent of $x$. Hence the eigenvalues of $A(x)$, along with
their multiplicities, are independent of $x$. Since
$(E_*\, ,\varphi)$ is irreducible, there is exactly one eigenvalue
(otherwise the generalized eigenspace decomposition of $E$
contradicts irreducibility). On the other hand, $A$ is of trace
zero (see \eqref{in}). Hence $A$ does not have any
nonzero eigenvalue.

Since $A$ does not have any  nonzero eigenvalue, we
conclude that the endomorphism $A$ is nilpotent. Now from
the argument in the proof of Proposition 18(ii) in \cite[p. 202]{At}
it follows that the functional $c$ in \eqref{c} satisfies the
identity $c(A)\,=\, 0$. Hence $c\,=\, 0$. Therefore,
$(E_*\, ,\varphi)$ admits an algebraic connection.
\end{proof}

\subsection{The case of orthogonal bundles}

We now consider the case where $(E_*\, ,\varphi)$ is an orthogonal
parabolic vector bundle.

Take any parabolic vector bundle $V_*$. Using the natural pairing
of $V_*$ with the parabolic dual $V^*_*$, the parabolic
vector bundle $V_*\oplus (V^*_*\otimes L_*)$ is equipped with
an orthogonal form with values in $L_*$. Let
$$
\varphi^s_{V_*}\, :\, (V_*\oplus (V^*_*\otimes L_*))\otimes
(V_*\oplus (V^*_*\otimes L_*))\, \longrightarrow\, L_*
$$
be this orthogonal form on $V_*\oplus (V^*_*\otimes L_*)$.

\begin{theorem}\label{thm2}
The orthogonal parabolic
bundle $(E_*\, ,\varphi)$ admits an algebraic connection if and only 
the following holds: For every parabolic vector bundle $V_*$ satisfying
the condition that there is an orthogonal parabolic vector bundle 
$(W_*\, , \phi)$ such that
$$
(E_*\, ,\varphi)\,=\,((V_*\oplus (V^*_*\otimes L_*))\oplus W_*\, , 
\varphi^s_{V_*}\oplus \phi)\, ,
$$
we have
$$
\text{\rm par-deg}(V_*)\,=\, 0\, .
$$
\end{theorem}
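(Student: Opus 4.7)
I would mirror the proof of Theorem \ref{thm1} almost verbatim, substituting ``orthogonal'' for ``symplectic'' and $\varphi^s_{V_*}$ for $\varphi^a_{V_*}$ throughout. For the necessity direction, fix an isomorphism
$\tau\, :\, (E_*\, ,\varphi)\, \longrightarrow\, ((V_*\oplus (V^*_*\otimes L_*))\oplus W_*\, ,\varphi^s_{V_*}\oplus \phi)$
and let $i_V$, $j_V$ be the injection and projection constructed from $\tau$ as in \eqref{f1}. For any algebraic connection $D$ on $(E_*\, ,\varphi)$, the composition $(j_V\otimes \text{Id})\circ D\circ i_V$ is an algebraic connection on $V_*$, so Lemma \ref{lem1} yields $\text{par-deg}(V_*)\,=\, 0$.

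For sufficiency, I would first reduce to the case where $(E_*\, ,\varphi)$ is irreducible, i.e., not a direct sum of two orthogonal parabolic bundles of positive rank; the hypothesis descends to each summand of such a decomposition, and a direct sum of algebraic connections on the summands gives one on the whole. Then, exactly as in the symplectic proof, the goal is to show that the obstruction class
$$
c\, \in\, H^1(X,\, \text{ad}^0(E_*\, ,\varphi)\otimes \Omega_X) \,\cong\, H^0(X,\, \text{ad}(E_*\, ,\varphi))^*
$$
to the splitting of the sequence \eqref{f6} vanishes; the identification uses Serre duality together with \eqref{g1}.

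For any $A\, \in\, H^0(X,\, \text{ad}(E_*\, ,\varphi))$, the eigenvalues of $A(x)$ are independent of $x\, \in\, X$ because $X$ admits no nonconstant regular functions. The $\varphi$-compatibility $\varphi_0(A(\alpha)\otimes\beta)+\varphi_0(\alpha\otimes A(\beta))\,=\, 0$ forces the generalized eigenspaces $E^\lambda$ and $E^\mu$ to be $\varphi$-orthogonal whenever $\lambda+\mu\,\neq\, 0$. Grouping eigenvalues into $\{\lambda\, ,-\lambda\}$ pairs therefore produces an orthogonal direct sum decomposition of $(E_*\, ,\varphi)$, with the quasi-parabolic flags decomposing compatibly because $A$ preserves them. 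Irreducibility forces $A$ to have a single eigenvalue, and the trace-zero condition \eqref{in} forces that eigenvalue to be $0$, so $A$ is nilpotent. The argument in \cite[p.~202, Proposition~18(ii)]{At} then gives $c(A)\,=\, 0$; as $A$ is arbitrary, $c\,=\, 0$ and $(E_*\, ,\varphi)$ admits an algebraic connection.

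The main subtle point is the nilpotency step. One must verify that the generalized eigenspace decomposition of $A$ is genuinely a decomposition of $(E_*\, ,\varphi)$ as an orthogonal direct sum of parabolic bundles: the parabolic filtrations at each $x_i\, \in\, S$ must decompose compatibly (which uses that $A$ preserves the flags), and the form $\varphi$ must restrict nondegenerately to each grouped piece $E^0$ or $E^\lambda\oplus E^{-\lambda}$ (which follows from the global nondegeneracy of $\varphi$ together with the mutual $\varphi$-orthogonality of the pieces). Once this is in place, the rest of the argument is formally identical to the symplectic case.
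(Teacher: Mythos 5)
Your proposal follows the paper exactly: the paper's entire proof of Theorem \ref{thm2} is the remark that it is identical to the proof of Theorem \ref{thm1}, and your necessity argument, the reduction to the irreducible case, and the identification of the obstruction class $c$ via Serre duality and \eqref{g1} all match the intended argument.

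There is, however, a genuine gap in the nilpotency step, and your added detail makes it visible (the same gap is present, more tersely, in the paper's own proof of Theorem \ref{thm1}). Your pairing argument shows that the generalized eigenspaces group into mutually orthogonal blocks indexed by the classes $\{\lambda\, ,-\lambda\}$, so irreducibility only forces a \emph{single class}, not a single eigenvalue. The surviving case is $E\,=\,E^{\lambda}\oplus E^{-\lambda}$ with $\lambda\,\neq\,0$: each $E^{\pm\lambda}$ is isotropic and the two summands have equal rank (they are exchanged by $\widetilde\varphi$ up to $L_*$), so $\mathrm{trace}(A)\,=\,0$ holds automatically while $A$ is not nilpotent. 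Such a hyperbolic bundle $(V_*\oplus(V^*_*\otimes L_*)\, ,\varphi^s_{V_*})$ can certainly be irreducible in the stated sense: for instance $V\oplus V^{-1}$ with $\deg V\,>\,0$ and the standard pairing has no orthogonal line subbundle, yet carries the non--nilpotent section $\mathrm{diag}(1,-1)$ of its adjoint bundle. This is exactly the situation in which the hypothesis $\text{par-deg}(V_*)\,=\,0$ must be invoked --- note that, as written, neither your argument nor the paper's uses that hypothesis anywhere in the sufficiency direction, which is a warning sign. To close the gap, split such an $A$ into its semisimple part $\lambda(p_{\lambda}-p_{-\lambda})$ plus a nilpotent part lying in $\mathrm{ad}(E_*\, ,\varphi)$; Atiyah's argument kills the contribution of the nilpotent part to $c(A)$, while the semisimple part contributes $\lambda\bigl(\text{par-deg}(E^{\lambda}_*)-\text{par-deg}(E^{-\lambda}_*)\bigr)\,=\,2\lambda\cdot\text{par-deg}(V_*)$ (using $\text{par-deg}(L_*)\,=\,0$), and this vanishes precisely by the hypothesis of the theorem.
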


The proof of Theorem \ref{thm2} is identical to the proof of
Theorem \ref{thm1}.

In the absence of any parabolic structure, Theorem \ref{thm1}
and Theorem \ref{thm2} coincide with the main theorem of
\cite{AB} (Theorem 4.1) for symplectic and orthogonal bundles
respectively.

\section{Semistable and polystable parabolic bundles}

\subsection{Semistability of tensor product}\label{sec5.1}

Let $E_*\,=\, (E\, ,\{F_{i,j}\}\, ,\{\alpha_{i,j}\})$ be a parabolic
vector bundle over $X$. Any subbundle $F$ of $E$ is equipped with an
induced parabolic structure which is obtained by restricting the 
quasi--parabolic filtrations and the parabolic weights of $E$ to
$F$. Let $F_*$ be the parabolic vector bundle obtained this way.

The parabolic vector bundle $E_*$ is called \textit{stable}
(respectively, \textit{semistable}) if for every subbundle
$F\, \subset\, E$ with $0\, <\, \text{rank}(F)\, <\, \text{rank}(E)$,
the inequality
\begin{equation}\label{s}
\mu_{\rm par}(F_*)\, <\, \mu_{\rm par}(E_*)
~\,~ \text{~(respectively,~}~\,~ \mu_{\rm par}(F_*)\, \leq\,
\mu_{\rm par}(E_*){\rm )}
\end{equation}
holds (see \cite{Se1}, \cite[p. 69, D\'efinition 6]{Se2}).

The parabolic vector bundle $E_*$ is called \textit{polystable}
if is semistable, and isomorphic to a direct sum of stable
parabolic vector bundles.

Fix a complete Hermitian metric $g_X$ on $X\setminus S$;
it is K\"ahler because $\dim_{\mathbb C} X\,=\, 1$.
The notion of a Einstein--Hermitian metric on vector bundles
over $X$ extends to a notion of Einstein--Hermitian metric on
parabolic vector bundles (see \cite[p. 492, Definition 5.7]{Po},
\cite{Si} for the details).

The following is a basic theorem (see \cite[p. 497, Theorem 6.4]{Po}, 
\cite[p. 718, Theorem]{Si}):

\begin{theorem}\label{b}
A parabolic vector bundle $E_*$ admits
an Einstein--Hermitian connection if and only if $E_*$ is
polystable, and the Einstein--Hermitian connection on a
polystable parabolic bundle is unique.
\end{theorem}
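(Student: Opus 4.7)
The plan is to establish Theorem \ref{b} as the Kobayashi--Hitchin correspondence for parabolic bundles on a curve, treating the two implications and the uniqueness separately. The key simplification throughout is that $\dim_{\mathbb C} X = 1$, so the Einstein--Hermitian equation is a single scalar equation on $X \setminus S$ and the technical difficulty of the higher-dimensional Uhlenbeck--Yau theory disappears; what survives is the need to match the singularities of the Hermitian metric near $S$ with the prescribed parabolic weights.

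For the implication ``$E_*$ admits an Einstein--Hermitian connection $\Rightarrow E_*$ is polystable'' I would run the standard Chern--Weil argument adapted to the parabolic setting. Let $\nabla$ be the Einstein--Hermitian connection of a Hermitian metric $h$ whose singularities at $S$ are adapted to the weights $\{\alpha_{i,j}\}$, and let $F \subset E$ be any holomorphic subbundle with induced parabolic structure $F_*$. Writing $\pi : E \to F$ for the $h$-orthogonal projection, the parabolic Chern--Weil formula (in which the contribution $\sum_{i,j}\alpha_{i,j}\dim(F_{i,j}/F_{i,j+1})$ from the weights replaces the usual boundary term) combined with the pointwise identity for $|\bar\partial\pi|^2_h$ yields
\[
\mu_{\mathrm{par}}(F_*) \,\le\, \mu_{\mathrm{par}}(E_*) \,-\, \frac{1}{\mathrm{rank}(F)} \int_{X\setminus S} |\bar\partial\pi|^2_h,
\]
so $E_*$ is semistable; equality forces $\bar\partial\pi = 0$, i.e.\ $F_*$ is $h$-orthogonally complemented, and iterating on the orthogonal complement produces the polystable decomposition.

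For the converse I would first reduce to the stable case: if $E_* = \bigoplus_i E^i_*$ with each $E^i_*$ stable of the same parabolic slope as $E_*$, then the orthogonal direct sum of Einstein--Hermitian connections on the summands is Einstein--Hermitian on $E_*$. For a stable $E_*$, after twisting by an auxiliary parabolic line bundle to reduce to $\mu_{\mathrm{par}}(E_*) = 0$, I would invoke the Mehta--Seshadri correspondence (extended to arbitrary real weights by Simpson and Poineau): $E_*$ corresponds to an irreducible unitary representation $\rho$ of $\pi_1(X \setminus S)$ whose local monodromy at $x_i$ has eigenvalues $\exp(2\pi\sqrt{-1}\,\alpha_{i,j})$ with the correct multiplicities. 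The flat unitary bundle associated with $\rho$ on $X \setminus S$ extends, via Deligne's canonical extension, to the vector bundle $E$ with a logarithmic connection whose residues recover the parabolic filtration and weights, and the flat unitary metric is the desired Einstein--Hermitian structure.

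Uniqueness is the classical Kobayashi argument: given two Einstein--Hermitian metrics $h_1, h_2$ on a polystable $E_*$, the $h_1$-self-adjoint endomorphism $s := h_1^{-1} h_2$ satisfies $\Delta \log \mathrm{tr}(s) \ge 0$, and the maximum principle, applied in the complete K\"ahler setting on $X \setminus S$ with the parabolic asymptotics providing the needed boundary control, forces $s$ to be $\nabla$-parallel; polystability then forces $s$ to be scalar on each stable component, which gives uniqueness up to the trivial rescaling on summands. The main obstacle in executing this plan rigorously is the delicate analytic work near $S$: both the parabolic Chern--Weil formula for arbitrary real weights and the extension of Mehta--Seshadri beyond the rational case require careful control of the asymptotic behavior of the singular Hermitian metric matched to the weights, and this is precisely the content of the Simpson and Poineau papers cited in the excerpt.
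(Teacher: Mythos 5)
The paper does not actually prove Theorem \ref{b}; it is quoted as a known result with references to Poritz \cite{Po} and Simpson \cite{Si}, so there is no internal argument to compare yours against. Your outline is the standard Kobayashi--Hitchin blueprint that those references carry out: the Chern--Weil/second-fundamental-form inequality for the ``only if'' direction, reduction to the stable case plus a Mehta--Seshadri-type correspondence for existence, and the maximum principle for uniqueness. As a roadmap this is the right shape, but as written it is a plan rather than a proof: every genuinely hard step --- existence of an adapted Einstein--Hermitian metric on a stable parabolic bundle with arbitrary \emph{real} weights, and the asymptotic control near $S$ needed to justify both the integration by parts in the Chern--Weil inequality and the maximum principle on the complete noncompact surface $X\setminus S$ --- is deferred to exactly the sources the paper already cites, and you say so yourself. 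The most substantive issue is the stable-existence step: for parabolic degree zero, the assertion ``stable implies the existence of a flat unitary (hence Einstein--Hermitian) metric with the prescribed local monodromy'' \emph{is} the Mehta--Seshadri theorem (in its real-weight extension), so invoking it there is essentially invoking the theorem being proved; it is only non-circular if you treat Mehta--Seshadri and its extension to irrational weights (Biquard, Poritz, Simpson) as independent inputs, which should be stated explicitly.

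Two smaller corrections. The reference you call ``Poineau'' is presumably Poritz \cite{Po}. And on a curve the Einstein--Hermitian equation $\Lambda F_h \,=\, c\cdot \mathrm{Id}_E$ is still an endomorphism-valued equation, not literally ``a single scalar equation''; the simplification in dimension one is the absence of the higher-dimensional integrability and blow-up analysis, while the matching of the metric's singularities at $S$ to the weights --- which you correctly identify --- remains the essential analytic content.
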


A $C^\infty$ connection on $E_*$ is a $C^\infty$ splitting of
the short exact sequence in \eqref{f6}.
An Einstein--Hermitian connection is not algebraic unless it is
flat.

\begin{lemma}\label{lem4}
If $E_*$ and $V_*$ are parabolic polystable vector bundles,
then the parabolic tensor product $E_*\otimes V_*$ is also
polystable.
\end{lemma}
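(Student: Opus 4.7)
The plan is to invoke the parabolic Kobayashi--Hitchin correspondence stated in Theorem \ref{b}. Since $E_*$ and $V_*$ are polystable, Theorem \ref{b} supplies Einstein--Hermitian connections $\nabla_E$ and $\nabla_V$ on $E_*$ and $V_*$ respectively, coming from Hermitian metrics $h_E$ and $h_V$ compatible with the parabolic structures. My goal is to build an Einstein--Hermitian connection on the parabolic tensor product $E_*\otimes V_*$ out of these data; another application of Theorem \ref{b} will then force $E_*\otimes V_*$ to be polystable.

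Concretely, I would form the tensor product connection $\nabla\,:=\,\nabla_E\otimes\mathrm{Id}+\mathrm{Id}\otimes\nabla_V$ on the $C^\infty$ bundle underlying $E_*\otimes V_*$ away from $S$, with respect to the tensor product metric $h_E\otimes h_V$. The curvature satisfies the standard identity
$$
K(\nabla)\,=\,K(\nabla_E)\otimes \mathrm{Id}_V+\mathrm{Id}_E\otimes K(\nabla_V).
$$
If $K(\nabla_E)$ and $K(\nabla_V)$ are proportional to $\omega\cdot\mathrm{Id}$ with factors $\lambda_E$ and $\lambda_V$ in the sense of \cite[Definition 5.7]{Po}, then $K(\nabla)$ is similarly proportional with factor $\lambda_E+\lambda_V$. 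A routine computation using \eqref{pd} and the fact that parabolic degree is additive under parabolic tensor product shows that $\lambda_E+\lambda_V$ is precisely the Einstein factor determined by $\mu_{\rm par}(E_*\otimes V_*)$.

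The main obstacle is the verification, near the parabolic divisor $S$, that the tensor product connection and metric induce the correct parabolic structure on $E_*\otimes V_*$ and satisfy the local growth/residue conditions built into the definition of an Einstein--Hermitian parabolic connection in \cite[p.~492]{Po}. This amounts to checking that the filtration $\{\mathcal E_t\}_{t\in \mathbb R}$ defining $E_*\otimes V_*$ (as described in the construction preceding \eqref{et}) is precisely the one seen by $h_E\otimes h_V$ through its singular behaviour at each $x_i\in S$. Once this local analysis is in place, the tensor product connection is Einstein--Hermitian on $E_*\otimes V_*$, so Theorem \ref{b} yields polystability, completing the proof.
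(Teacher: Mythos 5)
Your proposal follows exactly the paper's own argument: both deduce Einstein--Hermitian connections on $E_*$ and $V_*$ from Theorem \ref{b}, observe that the induced tensor product connection on $E_*\otimes V_*$ is again Einstein--Hermitian, and conclude polystability by applying Theorem \ref{b} in the other direction. The local verification near $S$ that you flag as the main obstacle is precisely the step the paper leaves implicit, so your write-up is, if anything, more explicit about what needs checking.
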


\begin{proof}
Both $E_*$ and $V_*$ admit Einstein--Hermitian connection
by Theorem \ref{b}. The connection on $E_*\otimes V_*$ induced
by Einstein--Hermitian connections on $E_*$ and $V_*$ is again 
Einstein--Hermitian. Hence $E_*\otimes V_*$ is polystable.
\end{proof}

Any semistable parabolic vector bundle admits a filtration of
subbundles such that each successive quotient is polystable of same
parabolic slope. Therefore, Lemma \ref{lem4} has the following
corollary:

\begin{corollary}\label{cor1}
If $E_*$ and $V_*$ are parabolic semistable vector bundles,
then the parabolic tensor product $E_*\otimes V_*$ is also
semistable.
\end{corollary}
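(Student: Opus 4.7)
The plan is to reduce directly to Lemma \ref{lem4} by invoking the Jordan--H\"older style filtration for semistable parabolic bundles that is mentioned just before the corollary statement.

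First, I would choose filtrations
$$
0 \,=\, E^0_* \,\subset\, E^1_* \,\subset\, \cdots \,\subset\, E^a_* \,=\, E_*
\quad \text{and} \quad
0 \,=\, V^0_* \,\subset\, V^1_* \,\subset\, \cdots \,\subset\, V^b_* \,=\, V_*
$$
by parabolic subbundles, where each successive quotient $E^i_*/E^{i-1}_*$ (respectively, $V^j_*/V^{j-1}_*$) is parabolic polystable of parabolic slope equal to $\mu_{\rm par}(E_*)$ (respectively, $\mu_{\rm par}(V_*)$). The existence of such filtrations is precisely the statement recalled just before the corollary.

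Next, I would use the fact that the parabolic tensor product is exact in each argument (the tensor product defined via the filtrations $\{V_t\}$, $\{W_t\}$ in the paper commutes with short exact sequences of parabolic bundles). Tensoring the first filtration with $V_*$, and then refining each step by tensoring with the second filtration, produces a filtration of $E_*\otimes V_*$ whose successive quotients are of the form
$$
(E^i_*/E^{i-1}_*) \otimes (V^j_*/V^{j-1}_*)\, .
$$
By Lemma \ref{lem4}, each of these tensor products is parabolic polystable, and hence semistable. Additivity of parabolic slopes under tensor products gives that the slope of each subquotient equals $\mu_{\rm par}(E_*) + \mu_{\rm par}(V_*) = \mu_{\rm par}(E_*\otimes V_*)$.

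Finally, I would conclude using the standard fact that a parabolic vector bundle admitting a filtration whose successive quotients are semistable of the same parabolic slope $\mu$ is itself semistable of slope $\mu$: any destabilizing subbundle would, by intersecting with the filtration and taking successive images, produce a destabilizing subbundle of some successive quotient, contradicting the semistability of that quotient. The main obstacle is simply to justify the two background facts used here, namely the exactness of parabolic tensor product and the additivity of parabolic slope under tensor product; both are standard consequences of the filtration-based definition of parabolic tensor product recalled in Section 2.2, so no serious calculation is needed.
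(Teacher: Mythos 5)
Your proposal is correct and follows exactly the route the paper intends: the paper derives Corollary \ref{cor1} from Lemma \ref{lem4} by invoking the filtration of a semistable parabolic bundle with polystable successive quotients of the same slope, and your write-up simply makes explicit the routine details (exactness of the parabolic tensor product, additivity of parabolic slope, and closure of semistability of fixed slope under extensions) that the paper leaves implicit.
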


\subsection{Semistable and stable orthogonal and symplectic bundles}

Let $V$ be a finite dimensional complex vector space equipped
with an orthogonal form $B$. Let
$\text{SO}(V)\, \subset\, \text{SL}(V)$
be the subgroup consisting of all automorphisms that preserve $B$.
Let
$$
\text{GO}(V)\, \subset\, \text{GL}(V)
$$
be the subgroup consisting of all automorphisms $T$ satisfying the
condition that there is a constant $c\, \in\, {\mathbb C}^*$ such that
$$
B(T(v)\, ,T(w))\,=\, c\cdot B(v\, ,w)
$$
for all $v\, ,w\, \in\, V$. So $\text{GO}(V)$ fits in a short exact 
sequence
$$
e\, \longrightarrow\, \text{SO}(V)\, \longrightarrow\,\text{GO}(V)
\, \longrightarrow\, {\mathbb C}^*\, \longrightarrow\, e\, .
$$

A linear subspace $V_0\, \subset\, V$ is called \textit{isotropic}
if $B(v\, ,w)\,=\,0$ for all $v\, ,w\, \in\, V_0$. The subgroup of
$\text{GO}(V)$ that preserves a fixed nonzero isotropic subspace
$V_0$ is a maximal parabolic subgroup of $\text{GO}(V)$. In fact
all maximal parabolic subgroups of $\text{GO}(V)$ arise this
way. Let $V_0$ be a nonzero isotropic subspace of $V$. Let
$$
P\, \subset\, \text{GO}(V)
$$
be the corresponding maximal parabolic subgroup. We will describe
a Levi subgroup of $P$.
Consider the orthogonal subspace $V^\perp_0\, \subset\, V$
for $V_0$. Since $V_0$ is isotropic, we have $V_0\, \subset\,
V^\perp_0$. Fix a complement $W_0\, \subset\, V^\perp_0$ of the
subspace $V_0$. The restriction of $B$ to $W_0$ is nondegenerate. Fix
an isotropic subspace $V_1\, \subset\, V$ such that $V_1$ is
a complement of $V^\perp_0$. The subgroup of $P$ consisting of
all automorphisms preserving both $W_0$ and $V_1$ is a Levi subgroup
of $P$. All Levi subgroups of $P$ are of this form for some choices
of $W_0$ and $V_1$.

If $B'$ is a symplectic form on $V$, then define
$$
\text{Gp}(V)\, \subset\, \text{GL}(V)
$$
to be the subgroup consisting of all automorphisms $T$ satisfying the
condition that there is $c\, \in\, {\mathbb C}^*$ such that
$$
B'(T(v)\, ,T(w))\,=\, c\cdot B'(v\, ,w)
$$
for all $v\, ,w\, \in\, V$. As before, a linear subspace $V_0\, 
\subset\, V$ is called \textit{isotropic}
if $B(v\, ,w)\,=\,0$ for all $v\, ,w\, \in\, V_0$. Maximal parabolic
subgroups of $\text{Gp}(V)$, and the Levi subgroups of
maximal parabolic subgroups, have exactly identical description as
those for $\text{GO}(V)$.

Let $E$ be a holomorphic vector bundle on $X$, and let $L$ be 
a holomorphic line bundle on $X$. Let
$$
\varphi_0\, :\, E\otimes E\, \longrightarrow\, L
$$
be a nondegenerate bilinear form which is either symmetric or
anti--symmetric. So $(E\, ,\varphi_0)$ defines a principal
$\text{GO}(V)$--bundle or a principal
$\text{Gp}(V)$--bundle on $X$ depending on whether $\varphi_0$
is symmetric or anti--symmetric, where $V$ is as before with
$\dim V\,=\, \text{rank}(E)$.

A holomorphic subbundle $F\, \subset\, E$ is called \textit{isotropic}
if $\varphi_0(F\bigotimes F)\,=\, 0$.

Combining the above descriptions of maximal parabolic and Levi 
subgroups with the definition of a (semi)stable principal bundle
(see \cite[page 129, Definition 1.1]{Ra}
and \cite[page 131, Lemma 2.1]{Ra}), we get the following:

The principal bundle defined by $(E\, ,\varphi_0)$ is
stable (respectively, semistable) if and only if for all
nonzero isotropic subbundles $F\, \subset\, E$,
$$
\frac{\text{degree}(F)}{\text{rank}(F)}\, <\,
\frac{\text{degree}(E)}{\text{rank}(E)}~\, ~\text{(respectively,}~
\frac{\text{degree}(F)}{\text{rank}(F)}\, \leq\,
\frac{\text{degree}(E)}{\text{rank}(E)}{\rm )}\, .
$$

The principal bundle defined by $(E\, ,\varphi_0)$ is 
polystable if and only if
either $(E\, ,\varphi_0)$ is stable, or there is a 
polystable vector bundle $W$ with
$$
\frac{\text{degree}(W)}{\text{rank}(W)}\,=\,
\frac{\text{degree}(E)}{\text{rank}(E)}\, ,
$$
and an orthogonal or symplectic stable parabolic
vector bundle $(F\, ,\phi)$ (depending on whether
$(E\, ,\varphi_0)$ is orthogonal or symplectic),
with $\phi$ taking values in the
same line bundle $L$ as for $\varphi_0$, such that
$(E\, ,\varphi_0)$ is isomorphic to the direct sum
$$
((W\oplus (W^*\otimes L))\oplus F\, , \varphi_{W}
\oplus\phi)\, ,
$$
where $\varphi_{W}$ is the obvious orthogonal or symplectic 
structure on $W\oplus (W^*\otimes L)$ constructed using the 
natural paring of $W$ with its dual $W^*$.

\subsection{Semistable and stable orthogonal and symplectic
parabolic bundles}

Let $(E_*\, ,\varphi)\,=\, ((E\, ,\{F_{i,j}\}\, ,\{\alpha_{i,j}\})\, ,
\varphi)$ be an orthogonal or symplectic parabolic bundle.
To clarify, we no longer assume that the parabolic
degree of $L_*$ is zero.

A subbundle $F$ of $E$ is called \textit{isotropic} if the
restriction of $\varphi$ to $F\otimes F$ vanishes identically.

The bundle $(E_*\, ,\varphi)$ is called \textit{stable}
(respectively, \textit{semistable}) if
$$
\mu_{\rm par}(F_*)\, <\, \mu_{\rm par}(E_*)
~\,~ \text{~(respectively,~}~\,~ \mu_{\rm par}(F_*)\, \leq\, \mu_{\rm 
par}(E_*){\rm )}
$$
for all nonzero isotropic subbundles $F$ of $E$ with
$0\, <\, \text{rank}(F)\, <\, \text{rank}(E)$.

The bundle $(E_*\, ,\varphi)$ is called \textit{polystable} if
either $(E_*\, ,\varphi)$ is stable, or there is a 
parabolic polystable vector
bundle $V_*$ with $\mu_{\rm par}(V_*)\,=\, \mu_{\rm par}(E_*)$, and
an orthogonal or symplectic stable parabolic
vector bundle $(F_*\, ,\phi)$ (depending on whether
$(E_*\, ,\varphi)$ is orthogonal or symplectic) such that
$(E_*\, ,\varphi)$ is isomorphic to the direct sum
$$
((V_*\oplus (V^*_*\otimes L_*))\oplus F_*\, , \varphi_{V_*}
\oplus\phi)\, ,
$$
where $\varphi_{V_*}$ is the obvious orthogonal or symplectic 
structure on $V_*\oplus (V^*_*\otimes L_*)$ constructed using the 
natural paring of $V_*$ with its parabolic dual $V^*_*$
(so $\varphi_{V_*}$ is either $\varphi^a_{V_*}$ in Theorem
\ref{thm1}, or $\varphi^s_{V_*}$ in Theorem \ref{thm2}); the
parabolic vector bundle $F_*$ is allowed to be zero.

It is easy to see that if $(E_*\, ,\varphi)$ is polystable, then
$(E_*\, ,\varphi)$ is semistable.

In the above definition, the condition that $\mu_{\rm par}(V_*)
\,=\,\mu_{\rm par}(E_*)$ implies that
$$
\mu_{\rm par}(V^*_*\otimes L_*)\,=\, \mu_{\rm par}(E_*)
$$
because $\mu_{\rm par}(V_* \oplus(V^*_*\otimes L_*))\,=\,
\mu_{\rm par}(E_*)$ (see \eqref{iii}). If $F_*$ in the
above definition is nonzero, then $\mu_{\rm par}(F_*)\,=\,
\mu_{\rm par}(E_*)$ from \eqref{iii}.

\subsection{Harder--Narasimhan filtration}

Let $(E_*\, ,\varphi)\,=\, ((E\, ,\{F_{i,j}\}\, ,\{\alpha_{i,j}\})\, ,
\varphi)$ be an orthogonal or symplectic parabolic bundle.
Assume that the parabolic vector bundle $E_*$ is not semistable.
Then it has a unique Harder--Narasimhan filtration
\begin{equation}\label{hn}
V^1_*\, \subset\, V^2_* \, \subset\,\cdots \, \subset\,
V^{n-1}_* \, \subset\, V^n_*\,=\, E_*
\end{equation}
(see \cite[p. 70, Th\'eor\`eme 8]{Se2}).
Consider the filtration of $E^*_*\otimes L_*$
\begin{equation}\label{hn2}
E^*_*\otimes L^* \, \twoheadrightarrow\, (V^{n-1}_*)^*\otimes L_*
\, \twoheadrightarrow\,\cdots \, \twoheadrightarrow\, (V^1_*)^*
\otimes L_*\, ,
\end{equation}
where $V^i_*$, $1\leq\, i\, \leq\, n$, are as in \eqref{hn} (the
kernels of the projections from $E^*_*\otimes L^*$ produce the
filtration in \eqref{hn2}). From the definition of a
Harder--Narasimhan filtration it follows immediately
that \eqref{hn2} is the Harder--Narasimhan filtration
of $E^*_*\otimes L_*$.

Any isomorphism between two parabolic vector bundles
preserves the Harder--Narasimhan filtrations,
because the Harder--Narasimhan filtration is unique.
Therefore, the isomorphism $\widetilde\varphi$ in \eqref{e6} takes
the filtration in \eqref{hn} to the filtration in \eqref{hn2}.
Therefore, we have the following proposition:

\begin{proposition}\label{prop1}
For any $i\, \in\, [1\, ,n-1]$, the image $\widetilde{\varphi}(V^i_*)$
coincides with the kernel of the projection
$E^*_*\otimes L^* \, \twoheadrightarrow\, (V^{n-i}_*)^*\otimes L_*$
in \eqref{hn2}.
\end{proposition}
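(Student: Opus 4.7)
My plan is to argue directly from the uniqueness of the Harder--Narasimhan filtration together with the fact that $\widetilde{\varphi}$ is an isomorphism of parabolic vector bundles. The scaffolding is already in place in the excerpt: the filtration in \eqref{hn2} is stated to be the HN filtration of $E^*_* \otimes L_*$, and the text has noted that any isomorphism of parabolic bundles must carry HN filtration to HN filtration by uniqueness.

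First I would spell out carefully why \eqref{hn2} really is the HN filtration of $E^*_* \otimes L_*$. Writing $K_i \subset E^*_* \otimes L_*$ for the kernel of the surjection onto $(V^{n-i}_*)^* \otimes L_*$, one has $K_1 \subset K_2 \subset \cdots \subset K_{n-1} \subset E^*_* \otimes L_*$ with successive quotient $K_i/K_{i-1}$ identified with the parabolic dual of $V^{n-i+1}_*/V^{n-i}_*$ tensored with $L_*$. Parabolic dualization negates slope while tensoring with a line bundle shifts it by a constant, and both operations preserve semistability; hence the strictly decreasing slope condition for the graded pieces of \eqref{hn} translates into the strictly decreasing slope condition required of an HN filtration on $E^*_* \otimes L_*$, and each quotient $K_i/K_{i-1}$ is parabolic semistable.

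With that in hand, the proof is essentially a transcription of the uniqueness principle. Since $\widetilde{\varphi} \colon E_* \longrightarrow E^*_* \otimes L_*$ is an isomorphism of parabolic bundles, it carries the unique HN filtration of $E_*$ term by term to the unique HN filtration of $E^*_* \otimes L_*$; thus $\widetilde{\varphi}(V^i_*)$ coincides with the $i$-th step $K_i$, which by construction is the kernel of the projection onto $(V^{n-i}_*)^* \otimes L_*$. The only small piece of bookkeeping is that the rank of $\widetilde{\varphi}(V^i_*)$ equals $\mathrm{rank}(V^i_*)$ while the rank of $K_i$ equals $\mathrm{rank}(E) - \mathrm{rank}(V^{n-i}_*)$, so one reads off as a corollary the identity $\mathrm{rank}(V^i_*) + \mathrm{rank}(V^{n-i}_*) = \mathrm{rank}(E)$, but this is forced by the step-by-step matching of the two HN filtrations rather than something we need to verify separately. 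I do not foresee any serious obstacle here; the substantive content has already been isolated in the assertion that \eqref{hn2} is an HN filtration.
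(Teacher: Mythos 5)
Your argument is correct and takes essentially the same route as the paper: one observes that the filtration in \eqref{hn2} is the Harder--Narasimhan filtration of $E^*_*\otimes L_*$ (since parabolic dualization negates slopes and tensoring with $L_*$ shifts them uniformly, preserving semistability of the graded pieces), and then invokes uniqueness of the Harder--Narasimhan filtration to conclude that the isomorphism $\widetilde{\varphi}$ matches the two filtrations term by term. The only difference is that you spell out the slope bookkeeping that the paper dismisses as following ``immediately from the definition.''
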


Since $V^1_* \, \subset\, V^{n-1}_*$, Proposition \ref{prop1} has the
following corollary:

\begin{corollary}\label{cor2}
The subbundle $V^1_* \, \subset\, E_*$ in \eqref{hn} is isotropic.
\end{corollary}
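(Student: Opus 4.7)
The plan is to extract the corollary as a direct consequence of Proposition \ref{prop1}, essentially by unwinding what the claimed equality of subsheaves means when one recalls the definition of $\widetilde{\varphi}$ in \eqref{e6}.

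First I would specialize Proposition \ref{prop1} to the index $i = 1$. This identifies $\widetilde{\varphi}(V^1_*)$ with the kernel of the surjection
\[
E^*_* \otimes L_* \,\twoheadrightarrow\, (V^{n-1}_*)^* \otimes L_*
\]
appearing in \eqref{hn2}. By construction of this dual filtration (as kernels of the restriction maps from $E^*_* \otimes L_*$), that kernel is precisely the subsheaf of $L_*$-valued functionals on $E_*$ whose restriction to $V^{n-1}_*$ is zero.

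Next I would invoke the containment $V^1_* \subset V^{n-1}_*$ from the Harder--Narasimhan filtration \eqref{hn}; this containment holds for any $n \geq 2$, which is the case since the corollary is only in effect when $E_*$ fails to be semistable. Consequently, any section of $\widetilde{\varphi}(V^1_*)$ annihilates sections of $V^1_*$ under the natural pairing of $E_*$ with $E^*_* \otimes L_*$.

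Finally I would translate this annihilation back to $\varphi$ via the definition of $\widetilde{\varphi}$: for locally defined sections $\alpha, \beta$ of $V^1_*$, the composition in \eqref{e6} gives $\varphi(\alpha \otimes \beta) = \widetilde{\varphi}(\alpha)(\beta)$, and the previous step shows the right-hand side vanishes. Hence $\varphi|_{V^1_* \otimes V^1_*} = 0$, i.e., $V^1_*$ is isotropic. There is no real obstacle here; the only point requiring a moment's care is to make sure one reads the kernel in \eqref{hn2} correctly, since the filtration is written as a chain of surjections rather than inclusions.
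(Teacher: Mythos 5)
Your proof is correct and follows exactly the paper's (very terse) argument: the paper derives the corollary from Proposition \ref{prop1} with $i=1$ together with the containment $V^1_* \subset V^{n-1}_*$, which is precisely what you spell out. Your unwinding of the kernel in \eqref{hn2} as the annihilator of $V^{n-1}_*$ and the translation back to $\varphi$ via $\varphi(\alpha\otimes\beta)=\widetilde{\varphi}(\alpha)(\beta)$ just makes explicit what the paper leaves implicit.
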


\begin{proposition}\label{prop2}
Let $(E_*\, ,\varphi)\,=\, ((E\, ,\{F_{i,j}\}\, ,\{\alpha_{i,j}\})
\, ,\varphi)$ be an orthogonal or symplectic parabolic bundle. Then
$(E_*\, ,\varphi)$ is semistable if and only if the parabolic
vector bundle $E_*$ is semistable.
\end{proposition}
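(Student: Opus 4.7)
The plan is to prove the two implications separately, with the forward direction being immediate and the reverse requiring the Harder--Narasimhan filtration together with Corollary \ref{cor2}.

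First I would dispose of the ``if'' direction. By definition of semistability for $(E_*\, ,\varphi)$, one need only check the slope inequality $\mu_{\rm par}(F_*)\, \leq\, \mu_{\rm par}(E_*)$ for \emph{isotropic} subbundles $F\, \subset\, E$ with $0\, <\, \text{rank}(F)\, <\, \text{rank}(E)$. But if $E_*$ is semistable as a parabolic vector bundle, this inequality holds for \emph{every} subbundle of the correct rank, in particular for the isotropic ones. Hence $(E_*\, ,\varphi)$ is semistable.

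For the converse, I would argue by contrapositive. Suppose $E_*$ is not semistable as a parabolic vector bundle. Then $E_*$ admits a nontrivial Harder--Narasimhan filtration
$$
V^1_*\, \subset\, V^2_* \, \subset\,\cdots \, \subset\,
V^{n-1}_* \, \subset\, V^n_*\,=\, E_*
$$
as in \eqref{hn}, with $n\, \geq\, 2$, so that $V^1_*$ is a proper nonzero subbundle satisfying $\mu_{\rm par}(V^1_*)\, >\, \mu_{\rm par}(E_*)$ (this last strict inequality is part of the definition of the Harder--Narasimhan filtration: $V^1_*$ is the maximal destabilizing subbundle). By Corollary \ref{cor2}, the subbundle $V^1_*$ is isotropic with respect to $\varphi$. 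Therefore $V^1_*$ is an isotropic subbundle with $0\, <\, \text{rank}(V^1_*)\, <\, \text{rank}(E)$ and $\mu_{\rm par}(V^1_*)\, >\, \mu_{\rm par}(E_*)$, contradicting the semistability of $(E_*\, ,\varphi)$. Hence $E_*$ must be semistable.

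There is no real obstacle here: the entire content of the nontrivial direction is packaged in Corollary \ref{cor2}, which in turn rests on Proposition \ref{prop1} (the uniqueness of the Harder--Narasimhan filtration forces $\widetilde{\varphi}$ to carry it to the dual filtration, so that $V^1_*$ sits inside its own annihilator). Once that isotropy is in hand, the argument is purely formal manipulation of the defining slope inequalities.
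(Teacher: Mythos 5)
Your proof is correct and follows exactly the same route as the paper: the ``if'' direction is immediate since isotropic subbundles are in particular subbundles, and the converse is established by observing that the first step $V^1_*$ of the Harder--Narasimhan filtration is isotropic (Corollary \ref{cor2}) and destabilizing. Nothing further is needed.
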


\begin{proof}
If the parabolic vector bundle $E_*$ is semistable, then obviously
$(E_*\, ,\varphi)$ is semistable. To prove the converse, assume
that the parabolic vector bundle $E_*$ is not semistable. Let
\eqref{hn} be the Harder--Narasimhan filtration of $E_*$. Since
$\mu_{\rm par}(V^1_*)\, >\, \mu_{\rm par}(E_*)$, and
$V^1_* \, \subset\, E_*$ is isotropic (see Corollary \ref{cor2}),
we conclude that $V^1_*$ violates the semistability criterion for
$(E_*\, ,\varphi)$. Therefore, $(E_*\, ,\varphi)$ is not semistable.
\end{proof}

\subsection{The socle filtration}

Let $E_*\,=\, (E\, ,\{F_{i,j}\}\, ,\{\alpha_{i,j}\})$ be a semistable
parabolic vector bundle.
If $V_*$ and $W_*$ are polystable nonzero subbundles of
$E_*$ with
$$
\mu_{\rm par}(V_*) \,=\, \mu_{\rm par}(W_*) \,=\,
\mu_{\rm par}(E_*)\, ,
$$
then the parabolic subbundle $F_*\, \subset\, E_*$
generated by $V_*$ and $W_*$ is also polystable with
$\mu_{\rm par}(F_*) \,=\,\mu_{\rm par}(E_*)$; the proof of it
identical to that of \cite[p. 23, Lemma 1.5.5]{HL}.

Therefore, there is a unique maximal polystable parabolic subbundle
$E'_*\, \subset\, E_*$ such that $\mu_{\rm par}(E'_*)\,=\,\mu_{\rm 
par}(E_*)$. This parabolic subbundle $E'_*$ is called the
\textit{socle} of $E_*$.

For the socle $E'_*\, \subset\, E_*$, if $E_*/E'_*\,\not=\, 0$,
then the parabolic vector bundle $E_*/E'_*$ is semistable, and
$\mu_{\rm par}(E_*/E'_*)\,=\,\mu_{\rm par}(E_*)$. We may consider
the socle of $E_*/E'_*$. Hence there is a
unique filtration of parabolic subbundles
\begin{equation}\label{sf}
0\, =\, E^0_*\, \subset\,
E^1_*\, \subset\, E^2_* \, \subset\,\cdots \, \subset\,
E^{m-1}_* \, \subset\, E^m_*\,=\, E_*
\end{equation}
such that for each $i\, \in\, [1\, ,m-1]$, the quotient
$E^i_*/E^{i-1}_*$ is the socle of $E_*/E^{i-1}_*$.

Since $E_*$ is semistable, the parabolic vector bundle $E^*_*\otimes
L_*$ is semistable. The filtration
\begin{equation}\label{sf2}
E^*_*\otimes L^* \, \twoheadrightarrow\, (E^{m-1}_*)^*\otimes L_*
\, \twoheadrightarrow\,\cdots \, \twoheadrightarrow\, (E^1_*)^*
\otimes L_*
\end{equation}
obtained from \eqref{sf} clearly coincides with the socle
filtration of $E^*_*\otimes L^*$.

Let $\varphi$ be a bilinear form $E_*$ such that
$(E_*\, ,\varphi)$ is an orthogonal or symplectic parabolic
vector bundle. From the uniqueness of the socle filtration it
follows immediately that the isomorphism $\widetilde\varphi$
in \eqref{e6} takes the filtration in \eqref{sf} to the
filtration in \eqref{sf2}.

\begin{proposition}\label{prop3}
Let $(E_*\, ,\varphi)$ be a polystable orthogonal or symplectic
parabolic bundle. Then the parabolic vector bundle $E_*$ is
polystable.
\end{proposition}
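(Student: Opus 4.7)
The proof will split into two cases according to the definition of polystability of $(E_*,\varphi)$: either $(E_*,\varphi)$ is stable, or it decomposes as $(V_*\oplus(V^*_*\otimes L_*)\oplus F_*,\,\varphi_{V_*}\oplus\phi)$ with $V_*$ a parabolic polystable vector bundle and $(F_*,\phi)$ a stable orthogonal or symplectic parabolic bundle.

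For the stable case, the plan is to use the socle filtration. By Proposition \ref{prop2}, the parabolic vector bundle $E_*$ is semistable, so it admits a socle filtration \eqref{sf}. The paragraph preceding the proposition records that $\widetilde{\varphi}$ carries \eqref{sf} onto \eqref{sf2}. Running the same annihilator argument used to derive Corollary \ref{cor2} from Proposition \ref{prop1}, applied now to the socle filtration instead of the Harder--Narasimhan filtration, one obtains $E^{m-1}_* = (E^1_*)^{\perp}$, where $(\cdot)^{\perp}$ denotes the $\varphi$-orthogonal complement. If $m\geq 2$ then $E^1_* \subseteq E^{m-1}_* = (E^1_*)^{\perp}$, so the socle $E^1_*$ is a proper nonzero isotropic subbundle with $\mu_{\rm par}(E^1_*)=\mu_{\rm par}(E_*)$, contradicting the stability of $(E_*,\varphi)$. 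Hence $m=1$ and $E_* = E^1_*$ is itself polystable.

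For the decomposable case, I would first apply the stable case just handled to $(F_*,\phi)$ (if nonzero) to conclude that $F_*$ is polystable as a parabolic vector bundle. The parabolic dual $V^*_*$ of the polystable $V_*$ is again polystable (dualize the Einstein--Hermitian connection furnished by Theorem \ref{b}), and any parabolic line bundle is vacuously stable, hence polystable; Lemma \ref{lem4} then yields that $V^*_*\otimes L_*$ is polystable. The three summands $V_*$, $V^*_*\otimes L_*$, and $F_*$ share the parabolic slope $\mu_{\rm par}(E_*)$ (combining the hypothesis $\mu_{\rm par}(V_*)=\mu_{\rm par}(E_*)$ with the identity \eqref{iii}), so $E_*$ is a direct sum of polystable parabolic bundles of common slope and is therefore polystable.

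The main obstacle is the stable case, concretely the identification $E^{m-1}_* = (E^1_*)^{\perp}$. This is the exact socle-filtration analog of Corollary \ref{cor2} and relies only on the fact, already noted in the text, that $\widetilde{\varphi}$ intertwines \eqref{sf} with \eqref{sf2}; once this is in hand the contradiction with stability of $(E_*,\varphi)$, and hence the whole proposition, follows formally.
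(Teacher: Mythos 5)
Your proof is correct, and it shares with the paper's argument the key technical ingredient: the fact, recorded just before the proposition, that $\widetilde\varphi$ intertwines the socle filtration \eqref{sf} with \eqref{sf2}, whence the socle $E^1_*$ is isotropic (only the containment $E^1_*\subseteq (E^1_*)^{\perp}$ is needed here; your stronger equality $E^{m-1}_*=(E^1_*)^{\perp}$ holds by a rank count but is not required). Where you genuinely diverge is in how polystability of $E_*$ is then extracted. The paper gives a single unified argument: assuming $E_*$ is not polystable, it invokes the polystability of $(E_*\, ,\varphi)$ to split off $E^1_*\oplus ((E^1_*)^*\otimes L_*)$ as a direct summand and contradicts the maximality of the socle. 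You instead case-split on the definition of polystability of $(E_*\, ,\varphi)$: in the stable case the proper nonzero isotropic socle of slope $\mu_{\rm par}(E_*)$ directly violates the stability inequality, forcing $m=1$; in the decomposable case you reduce to the stable case for $(F_*\, ,\phi)$ and assemble $E_*$ from polystable summands of equal parabolic slope using Lemma \ref{lem4}, Theorem \ref{b} and \eqref{iii}. Your route buys some rigor: the paper's step ``$(E_*\, ,\varphi)$ polystable and $E^1_*$ isotropic of the same slope implies $E^1_*\oplus((E^1_*)^*\otimes L_*)$ splits off'' is not an immediate consequence of the stated definition of polystability, which only guarantees such a splitting for \emph{some} polystable isotropic $V_*$ rather than for a prescribed one; your argument uses the definitions exactly as given. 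The cost is the extra, but routine, bookkeeping of the decomposable case.
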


\begin{proof}
Since $(E_*\, ,\varphi)$ is polystable, it is semistable.
Hence $E_*$ is semistable by Proposition \ref{prop2}.
Let \eqref{sf} be the socle filtration of $E_*$. It was noted
above that the isomorphism $\widetilde\varphi$
in \eqref{e6} takes the filtration in \eqref{sf} to the
filtration in \eqref{sf2}.

We assume that $E_*$ is not polystable. Therefore, we have
$\text{rank}(E^1_*)\, <\, \text{rank}(E_*)$, where $E^1_*$
is the socle of $E_*$ in \eqref{sf}.

Since $\widetilde\varphi$ takes
$E^{1}_*$ isomorphically to the kernel of the projection
$E^*_*\otimes L^* \,\longrightarrow\, (E^{m-1}_*)^*\otimes L_*$,
it follows immediately that
$$
\varphi(E^{1}_*\, , E^{m-1}_*) \, =\, 0\, .
$$
We now conclude that $E^{1}_*$ is isotropic because
$E^{1}_* \,\subset\, E^{m-1}_*$.
Since $(E_*\, ,\varphi)$ is polystable, and $E^1_*$ is an isotropic
subbundle with $\mu_{\rm par}(E^1_*)\,=\, \mu_{\rm par}(E_*)$, we
conclude that there is an orthogonal or symplectic parabolic
vector bundle $(F_*\, ,\phi)$ (depending on whether
$(E_*\, ,\varphi)$ is orthogonal or symplectic) such that
$(E_*\, ,\varphi)$ is isomorphic to the direct sum
$$
((E^1_*\oplus ((E^1_*)^*\otimes L_*))\oplus F_*\, , \varphi_{E^1_*} 
\oplus\phi)\, ,
$$
where $\varphi_{E^1_*}$ is the obvious orthogonal or symplectic 
structure on $E^1_*\oplus ((E^1_*)^*\otimes L_*)$ constructed using the 
natural paring of $E^1_*$ with its parabolic dual $(E^1_*)^*$
(so $\varphi_{E^1_*}$ is either $\varphi^a_{E^1_*}$ in Theorem
\ref{thm1}, or $\varphi^s_{E^1_*}$ in Theorem \ref{thm2}).

Note that $\mu_{\rm par}(E^1_*)\,=\, \mu_{\rm par}((E^1_*)^*\otimes
L_*) \,=\, \mu_{\rm par}(F_*)$. Since the parabolic
vector bundle $E_*$ is isomorphic to $E^1_*\oplus
((E^1_*)^*\otimes L_*)\oplus F_*$, we have a contradiction
to the fact that $E^1_*$ is the maximal polystable parabolic
subbundle of $E_*$ with parabolic slope $\mu_{\rm par}(E_*)$. 
Therefore, we conclude that $E_*$ is polystable.
\end{proof}

\section{Einstein--Hermitian connection on orthogonal
or symplectic parabolic bundle}

As in Section \ref{sec5.1}, fix a complete Hermitian metric $g_X$ on 
$X\setminus S$. Let $\nabla_L$ be the Einstein--Hermitian connection on
$L_*$ given by Theorem \ref{b} (we do not
assume that $\text{par-deg}(L_*)\,=\, 0$).

Let $(E_*\, ,\varphi)\,=\, ((E\, ,\{F_{i,j}\}\, ,\{\alpha_{i,j}\})\, ,
\varphi)$ be an orthogonal or symplectic parabolic bundle; the
form $\varphi$ takes values in $L_*$. If $D$ is
an Einstein--Hermitian connection on $E_*$, then the connection
$D^*$ on $E^*_*$ induced by $D$ is also Einstein--Hermitian. The
Einstein--Hermitian connection $D^*$ on $E^*_*$ and the
Einstein--Hermitian connection $\nabla_L$ on
$L_*$ together define an Einstein--Hermitian connection
on $E^*_*\otimes L_*$.

An \textit{Einstein--Hermitian connection} on $(E_*\, ,\varphi)$ is
an Einstein--Hermitian connection $D$ on $E_*$ such that the
isomorphism $\widetilde \varphi$ in \eqref{e6} takes $D$ to the
connection on $E^*_*\otimes L_*$ constructed using $D^*$
and $\nabla_L$.

\begin{theorem}\label{cor3}
Let $(E_*\, ,\varphi)$ be an orthogonal or symplectic parabolic bundle.
Then $(E_*\, ,\varphi)$ admits an Einstein--Hermitian connection if
and only if $(E_*\, ,\varphi)$ is polystable.
\end{theorem}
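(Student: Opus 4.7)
\emph{Plan.} I would prove Theorem \ref{cor3} by reducing both directions to the vector--bundle statement Theorem \ref{b}, combined with the link (Proposition \ref{prop3} and its converse) between polystability of $(E_*,\varphi)$ and that of the underlying parabolic vector bundle $E_*$. The key technical device throughout is the \emph{uniqueness} clause of Theorem \ref{b}: it forces the various constructions of Einstein--Hermitian connections (on duals, on tensor products, and via $\widetilde\varphi$) to agree automatically, with no explicit calculation needed.

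\emph{If direction.} Suppose $(E_*,\varphi)$ is polystable. By Proposition \ref{prop3}, the underlying parabolic vector bundle $E_*$ is polystable, so Theorem \ref{b} furnishes a unique Einstein--Hermitian connection $D$ on $E_*$. To verify that $D$ is compatible with $\varphi$, note that $\widetilde\varphi$ in \eqref{e6} is an isomorphism of parabolic vector bundles, so $E^*_*\otimes L_*$ is polystable as well and carries a unique Einstein--Hermitian connection $\widetilde D$. The pullback of $\widetilde D$ along $\widetilde\varphi$ is Einstein--Hermitian on $E_*$, hence equals $D$ by uniqueness; equivalently, $\widetilde\varphi$ intertwines $D$ with $\widetilde D$. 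On the other hand, by the construction recalled in the paragraph preceding the theorem, the connection on $E^*_*\otimes L_*$ built from $D^*$ and $\nabla_L$ is itself Einstein--Hermitian, so by uniqueness it too coincides with $\widetilde D$. Combining these two identifications yields precisely the compatibility defining an Einstein--Hermitian connection on $(E_*,\varphi)$.

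\emph{Only if direction and main obstacle.} Suppose $D$ is an Einstein--Hermitian connection on $(E_*,\varphi)$. Then $D$ is in particular Einstein--Hermitian on $E_*$, so Theorem \ref{b} forces $E_*$ to be polystable. Write the isotypic decomposition $E_*\cong\bigoplus_\alpha E^\alpha_*\otimes\mathbb{C}^{m_\alpha}$, with $E^\alpha_*$ pairwise nonisomorphic stable parabolic bundles of slope $\mu_{\mathrm{par}}(E_*)$. The compatibility of $D$ with $\widetilde\varphi$ forces $\widetilde\varphi$ to respect this decomposition: each index $\alpha$ is paired either with a distinct $\beta$ satisfying $E^\beta_*\cong(E^\alpha_*)^*\otimes L_*$, yielding a hyperbolic block $V_*\oplus(V^*_*\otimes L_*)$, or with itself (when $E^\alpha_*\cong(E^\alpha_*)^*\otimes L_*$), yielding a block on which $\varphi$ restricts, via Schur-type reasoning, to a nondegenerate symmetric or alternating form on the multiplicity space $\mathbb{C}^{m_\alpha}$; this finite-dimensional form in turn decomposes into hyperbolic planes plus, at most, a single stable orthogonal or symplectic residue. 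Assembling these pieces exhibits $(E_*,\varphi)$ in the form demanded by the polystability definition. The bookkeeping in this last step---tracking how $\widetilde\varphi$ permutes the isotypic components of $E_*$ and diagonalizing the induced bilinear form on each self-paired multiplicity space---is the principal obstacle; in effect, along the way one establishes the converse of Proposition \ref{prop3}.
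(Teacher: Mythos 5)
Your ``if'' direction is exactly the paper's argument: Proposition \ref{prop3} gives polystability of the underlying $E_*$, Theorem \ref{b} supplies the unique Einstein--Hermitian connection $D$, and the two Einstein--Hermitian connections on $E^*_*\otimes L_*$ (the one transported by $\widetilde\varphi$ and the one built from $D^*$ and $\nabla_L$) coincide by uniqueness. Your ``only if'' direction, however, takes a genuinely different route. The paper disposes of it in one sentence by asserting that the proof of the ``only if'' half of Theorem \ref{b} --- the differential-geometric argument that an Einstein--Hermitian connection forces polystability --- adapts to the orthogonal/symplectic setting; in particular the paper never needs the implication ``$E_*$ polystable $\Rightarrow (E_*,\varphi)$ polystable'' here, and instead derives it afterwards as Corollary \ref{cor4} \emph{from} Theorem \ref{cor3}. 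You go the other way: you use Theorem \ref{b} only as a black box to conclude that $E_*$ is polystable, and then prove ``$E_*$ polystable $\Rightarrow (E_*,\varphi)$ polystable'' directly via the isotypic decomposition and Schur-lemma analysis of how $\widetilde\varphi$ pairs the isotypic blocks (hyperbolic blocks for distinct paired indices; a nondegenerate symmetric or alternating form on the multiplicity space of a self-paired index, reduced to hyperbolic planes plus at most one residue). This works, and as a by-product it establishes Corollary \ref{cor4} independently; two points worth making explicit in the write-up are (i) $\widetilde\varphi$ preserves the isotypic decomposition simply because any isomorphism between polystable bundles of equal slope does --- compatibility with $D$ is not what is used --- and (ii) the direct sum of the leftover residues is \emph{stable} as an orthogonal/symplectic bundle in the paper's sense, since its slope-$\mu_{\rm par}(E_*)$ subbundles are sub-sums of pairwise non-isomorphic, non-isotropic stable pieces. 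What your route buys is that the only analytic input is the vector-bundle Kobayashi--Hitchin correspondence, everything else being algebra; what the paper's route buys is brevity, at the cost of asking the reader to rerun the proof of Theorem \ref{b} in the orthogonal/symplectic category.
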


\begin{proof}
If $(E_*\, ,\varphi)$ admits an Einstein--Hermitian connection, then
the proof of the ``only if'' part of Theorem \ref{b} gives that
$(E_*\, ,\varphi)$ is polystable. (It should be clarified that the
nontrivial part of Theorem \ref{b} is that a polystable parabolic
bundle admits an Einstein--Hermitian connection.)

To prove the converse, assume that $(E_*\, ,\varphi)$ is polystable.
Then the parabolic vector bundle $E_*$ is polystable by Proposition
\ref{prop3}. Let $D$ be the unique Einstein--Hermitian connection
on $E_*$ (Theorem \ref{b}). The connection on
$E^*_*\otimes L_*$ constructed using $D^*$ and $\nabla_L$ is
Einstein--Hermitian. On the other hand, the connection on
$E^*_*\otimes L_*$ given by $D$ using the isomorphism $\widetilde 
\varphi$ in \eqref{e6} is also Einstein--Hermitian. Now from the 
uniqueness 
of the Einstein--Hermitian connection on a polystable parabolic
vector bundle it follows immediately that the above two connections
on $E^*_*\otimes L_*$ coincide.
\end{proof}

We have the following converse of Proposition \ref{prop3}.

\begin{corollary}\label{cor4}
Let $(E_*\, ,\varphi)$ be an orthogonal or symplectic parabolic
bundle such that $E_*$ is polystable. Then $(E_*\, ,\varphi)$ is 
polystable.
\end{corollary}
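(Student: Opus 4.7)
The plan is to deduce this directly from Theorem \ref{cor3} by producing an Einstein--Hermitian connection on $(E_*\, ,\varphi)$ whenever $E_*$ is polystable. Indeed, once such a connection is exhibited, the ``only if'' direction of Theorem \ref{cor3} (which does not require the converse we are trying to prove) immediately gives that $(E_*\, ,\varphi)$ is polystable.

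To construct the connection, I would argue exactly as in the proof of the ``if'' direction of Theorem \ref{cor3}, but reversing the roles. Since $E_*$ is polystable, by Theorem \ref{b} there is a unique Einstein--Hermitian connection $D$ on $E_*$. I claim this $D$ is automatically compatible with $\varphi$. Consider on $E^*_*\otimes L_*$ the two connections: (a) the connection transported from $D$ via the isomorphism $\widetilde{\varphi}\, :\, E_*\, \longrightarrow\, E^*_*\otimes L_*$ in \eqref{e6}; and (b) the connection constructed from $D^*$ and $\nabla_L$. Connection (b) is Einstein--Hermitian by the very construction used to define Einstein--Hermitian connections on $(E_*\, ,\varphi)$. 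Connection (a) is Einstein--Hermitian because $\widetilde{\varphi}$ is an isomorphism of parabolic bundles and the Einstein--Hermitian property is preserved under such an isomorphism.

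Now, since $E_*$ is polystable, Lemma \ref{lem4} applied to $E^*_*$ (which is polystable by duality) and $L_*$ shows that $E^*_*\otimes L_*$ is polystable. Hence Theorem \ref{b} gives a \emph{unique} Einstein--Hermitian connection on $E^*_*\otimes L_*$, forcing (a) $=$ (b). By the definition of an Einstein--Hermitian connection on $(E_*\, ,\varphi)$, this equality says precisely that $D$ is such a connection.

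The final step is a citation: applying the ``only if'' part of Theorem \ref{cor3} to the Einstein--Hermitian connection $D$ on $(E_*\, ,\varphi)$ yields that $(E_*\, ,\varphi)$ is polystable. The only mildly delicate point is the uniqueness argument for the Einstein--Hermitian connection on $E^*_*\otimes L_*$; but this is immediate from Theorem \ref{b} once one knows that $E^*_*\otimes L_*$ is polystable, which is guaranteed by Lemma \ref{lem4}. No other obstacle is anticipated.
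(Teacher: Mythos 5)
Your proposal is correct and follows essentially the same route as the paper: the paper's proof of Corollary \ref{cor4} likewise takes the unique Einstein--Hermitian connection $D$ on $E_*$ from Theorem \ref{b}, observes (via the uniqueness argument already carried out in the proof of Theorem \ref{cor3}) that $D$ is compatible with $\varphi$, and then invokes Theorem \ref{cor3}. Your write-up merely makes explicit the step the paper leaves implicit, namely that $E^*_*\otimes L_*$ is polystable (via Lemma \ref{lem4}) so that the uniqueness in Theorem \ref{b} applies.
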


\begin{proof}
Let $D$ be the Einstein--Hermitian connection on $E_*$ given
by Theorem \ref{b}. We saw in the proof of Theorem
\ref{cor3} that $D$ is an Einstein--Hermitian connection on
$(E_*\, ,\varphi)$. Therefore, $(E_*\, ,\varphi)$ is polystable
by Theorem \ref{cor3}.
\end{proof}

It should be mentioned that if $(E_*\, ,\varphi)$ is stable,
then $E_*$ is not stable in general. To construct such
examples, take
stable bundles $(E_*\, ,\varphi)$ and $(F_*\, ,\phi)$ with
both orthogonal or both symplectic, and both
$\varphi$ and $\phi$ taking values in a fixed line bundle $L_*$.
Then $(E_*\oplus F_*\, ,\varphi\oplus \phi)$ is also stable,
but $E_*\oplus F_*$ is not stable.

%%%%%%%%%%%%%%%%%%%%%%%%%%%%%%%%%%%%%%%%%%%%%%%%%%%%%%%%% 

\end{document}